\documentclass[12pt]{amsart}
\usepackage{amsmath,amssymb}

\usepackage{enumerate}
\usepackage{graphicx}
\usepackage{overpic}
\usepackage{float}
\usepackage{xcolor}
\usepackage{accents}
\usepackage{hyperref}

\usepackage{mathtools}
%\mathtoolsset{showonlyrefs}

\DeclareMathAlphabet{\mathpzc}{OT1}{pzc}{m}{it}

\DeclareMathOperator{\tr}{tr}
\DeclareMathOperator{\cof}{cof}
\DeclareMathOperator{\sgn}{sgn}
\DeclareMathOperator{\spn}{span}

\newcommand{\br}{\bar\varrho}
\newcommand{\hd}{\mathpzc{h}}
\newcommand{\shear}{\mathpzc{g}}
\newcommand{\nullf}{\mathpzc{n}}
\newcommand{\volume}{v}
\newcommand{\tdsf}{a}
\newcommand{\wh}{W}
\newcommand{\pksh}{S}
\newcommand{\zinf}{\|\zeta\|_\infty}
\newcommand{\scp}{\sigma}
\newcommand{\omt}{\Omega_t}
\newcommand{\WT}{\widetilde{\Phi}}

\newcommand{\rr}{\mathbb{R}}
\newcommand{\mm}{\mathbb{M}}
\newcommand{\cc}{\mathcal C}
\newcommand{\bb}{\mathcal B}
\newcommand{\bbbar}{\overline{\mathcal B}}
\newcommand{\ff}{\mathcal F}
\newcommand{\ffzbm}{\mathcal F(\zeta,\mu)}
\newcommand{\llll}{\mathcal L}
\newcommand{\uu}{\mathcal U}
\newcommand{\eps}{\varepsilon}

\newcommand{\kk}{\kappa}
\newcommand{\half}{{\tfrac12}}
\newcommand{\third}{\tfrac13}
\newcommand{\glt}{{\rm GL}_+(3,\rr)}
\newcommand{\slt}{{{\rm SL}(3,\rr)}}
\newcommand{\sot}{{\rm SO}(3,\rr)}

\newcommand{\isp}[1]{\quad\text{#1}\quad}
\newcommand{\ip}[2]{\langle{#1},{#2}\rangle}

\allowdisplaybreaks

\newtheorem{theorem}{Theorem}
\newtheorem{lemma}[theorem]{Lemma}%shares same counter with theorem
\newtheorem{proposition}[theorem]{Proposition}%shares same counter with theorem
\newtheorem{corollary}[theorem]{Corollary}%shares same counter with theorem

\theoremstyle{remark}
\newtheorem*{remark}{Remark}

\numberwithin{equation}{section}
\numberwithin{theorem}{section}
%uncomment these if the counter is independent of theorem
%\numberwithin{lemma}{section}
%\numberwithin{corollary}{section}
%\numberwithin{proposition}{section}

\begin{document}

\title[Expansion and collapse of elastic bodies]{Expansion and collapse of
spherically symmetric  isotropic elastic bodies
   surrounded by vacuum\\}

 \author{Thomas C.\ Sideris}
\address{Department of Mathematics\\
University of California\\
Santa Barbara, CA 93106\\USA} 
\email{sideris@math.ucsb.edu}

\date{}

%\thanks{}

%\keywords{  }

\begin{abstract}
A class of isotropic and scale invariant strain energy functions
is given for which the corresponding spherically symmetric elastic motion includes
 bodies whose diameter becomes infinite with time
or collapses to zero in finite time, depending on the sign of the  residual pressure.
The bodies are surrounded by vacuum so that
  the boundary surface forces vanish,
 while the  density remains strictly positive.  
 The body is  subject only to internal elastic stress.
 \end{abstract}

\maketitle

\section{Introduction}
We shall be concerned with $C^2$
spherically symmetric and separable motions  of 
a three-dimensional hyperelastic material based
on a class of isotropic and scale invariant strain energy
functions.  
The  solid elastic body is surrounded by vacuum so that the boundary surface 
force vanishes, while the boundary density remains strictly positive.
The body is subject only to internal elastic stress.
Depending on the sign of the residual pressure, we shall show that
the diameter of a spherical body can expand to infinity with time
or it can collapse to zero in finite time.

In addition to the assumptions of objectivity and isotropy,
we shall impose the more severe restriction of scale invariance
on the strain energy function $W$.
That is, $W$ is a homogeneous function of degree $\hd$ in the deformation
gradient $F$.  In the next section, we will show that these basic
assumptions imply that $W$ has the form
\begin{equation*}
W( F)=(\det F)^{\hd/3} W(\Sigma(F))=(\det F)^{\hd/3} \Phi(\tr \Sigma(F),\tr\cof \Sigma(F)),
\end{equation*}
for all $F\in\glt$, in which 
\begin{equation*}
\Sigma(F)=(\det F)^{-1/3}(FF^\top)^{1/2},
\end{equation*}
 is the shear strain tensor, see \cite{Shadi-1998}.
 The factor $(\det F)^{\hd/3}$ accounts for compressibility,
and the quantity $\kk(\hd)=\tfrac\hd3\left(\tfrac\hd3-1\right)$
is proportional to  the bulk modulus at $F=I$.
It is physically natural and mathematically advantageous  to assume
that $\kk(\hd)>0$, and so, we take $\hd\in\rr\setminus[0,3]$. 
The function $\Phi$ measures the resistance of the material to
shear.
The special case of a polytropic fluid arises when 
$\Phi$ is constant and
$\hd/3=-(\gamma-1)$, where
$\gamma>1$ is the adiabatic index.
Here, however, we shall focus on the case where $\Phi$ is far from a constant.
This will be measured by the size of a parameter $\beta$ which is proportional to the shear modulus.
For example, an admissible choice would be
\begin{equation}
\label{example}
W(\Sigma(F))= 
1 + c_1\left(\third\tr \Sigma(F)-1\right)+c_2\left(\third\tr\cof\Sigma(F)-1\right),
\end{equation}
with $c_1$, $c_2>0$.  This function satisfies
\begin{equation*}
W(\Sigma(F))-1\sim
 \beta|\Sigma(F)-I|^2, \quad \beta\equiv c_1+c_2>0,
\end{equation*}
 in a neighborhood of $\Sigma(F)=I$.
More generally, higher order terms of the form $ \mathcal O\left(\beta|\Sigma(F)-I|^3\right)$
may be included.
We will return to this example in Section \ref{constsec}.

Little is known
 about the long-time  behavior of solutions to the initial free boundary
 value problem in elastodynamics.
 In order to gain some insight into the possible 
 behavior, we shall investigate the  restricted class of separable motions,
 the existence of which is dependent upon the scale invariance hypothesis mentioned above.
We shall call a motion {\em separable} if its material description has the form 
\begin{equation*}
x(t,y)=\tdsf(t)\varphi(y),
\end{equation*}
in which  $\tdsf:[0,\tau)\to\rr_+$ is a scalar function and 
$\varphi:\bb\to\rr^3$ is a time-independent deformation
of the reference domain $\bb\subset \rr^3$.
Separable motions are self-similar in spatial coordinates.
In order to have nonconstant shear $\Phi$, the function $\tdsf(t)$ must be a scalar.
This contrasts with the the case of polytropic fluids 
where there exist  affine motions with $\tdsf(t)$  taking values in $\glt$.

Under the separability assumption, the spatial configuration
of the body evolves by simple dilation, whereby the scalar 
$a(t)$ controls the diameter.
The equations of motion split into
an ordinary differential equation for the scalar $a(t)$ and
an eigenvalue problem for a nonlinear partial differential
equation involving  the deformation $\varphi(y)$.
The evolution of $a(t)$ depends on the sign of the residual pressure,
which turns out to be  $-\sgn\hd$.
When $\hd<0$,  the body continuously expands for all time with
$a(t)\sim t$, as $t\to\infty$.
On the other hand, when $\hd>3$, we have
$a(t)\to0$, as $t\to\tau<\infty$, so that 
the body collapses to a point in finite time.

The main effort, then, will be devoted to solving the 
nonlinear eigenvalue problem for the deformation $\varphi$ in $C^2$.  
This will be carried out
under the assumption of spherical symmetry,
consistent with the objectivity and isotropy of $W$,
whence the PDE for $\varphi$ reduces to an ODE.  
For spherically symmetric bodies, the boundary surface force is
a pressure, and
the nonlinear vacuum (traction) boundary condition 
requires  that the 
 pressure vanishes on $\partial\bb$.
 The vacuum boundary condition
shall be fulfilled with the material in a nongaseous phase, i.e.\ with
  strictly positive  density on $\partial\bb$. 
This also contrasts with the results 
on affine compressible fluid motion 
where the vacuum boundary condition holds 
in a gaseous phase, i.e. both pressure and density vanish
on the boundary.

The existence of a  family of spherically symmetric
eigenfunctions $\{\varphi^\mu\}$ close to the identity deformation with eigenvalue
$|\mu|\ll1$  will be established in Section \ref{efunctions}
by a perturbative fixed point argument,
for every value of the elastic moduli  $\kk(\hd)>0$
and  $\beta>0$.
The behavior of $W(\Sigma(F))$ restricted to the set of spherically symmetric
deformation gradients plays a decisive role, see Section \ref{sewsi}.
If $\beta$ is sufficiently large, then there exists an eigenvalue 
for which the eigenfunction satisfies the nongaseous vacuum boundary condition.
The positivity of the shear parameter $\beta$ rules out the hydrodynamical  case.
A detailed statement of the  existence results for
expanding and collapsing spherically symmetric separable motion follows in Section \ref{mainresult}.

In the final section, we aim to persuade the reader that the assumptions imposed on the strain
energy function are physically plausible.
We show that  any self-consistent choice for the values 
of $W(\Sigma(F))$, restricted to the spherically symmetric
deformation gradients, can be extended to all deformation gradients,
and we also show that the assumptions  are consistent
with the Baker-Ericksen condition \cite{Baker-Ericksen-1954}.

\subsection*{Related literature}
The equations of motion for nonlinear elastodynamics  with a  vacuum boundary condition
are locally well-posed in Sobolev spaces under appropriate coercivity conditions,
 see for example
\cite{Stoppelli-1954}, 
 \cite{Shibata-Kikuchi-1989},
 \cite{Shibata-1989},  
 \cite{Koch-1993}.
Local well-posedness for compressible  fluids with  a
liquid boundary condition was examined in \cite{Lindblad-2005},
\cite{Coutand-Hole-Shkoller-2013}
and with a gaseous vacuum boundary condition in
 \cite{Coutand-Shkoller-2012}, \cite{Jang-Masmoudi-2015}  respectively.

Affine motion for compressible hydrodynamical models has been studied extensively,
see for example \cite{Anisimov-1970}, \cite{Dyson-1968}, \cite{holm-1981}, \cite{Ovsiannikov-1956},
but without explicit discussion of boundary conditions.
Global in time expanding affine motions for compressible 
ideal fluids satisfying the gaseous vacuum boundary conditions were constructed and
analyzed in 
\cite{Sideris-2017}. 

For   {\em self-gravitating} polytropic fluids in the mass critical case, $\gamma=4/3$ (i.e. $\hd=-1$),
spherically symmetric self-similar collapsing solutions satisfying the gaseous vacuum boundary condition
were first studied numerically in \cite{Goldreich-Weber-1980} and later constructed rigorously
in \cite{Makino-1992}, \cite{Fu-1998}, \cite{Deng-2003}.  
In the mass supercritical case, $1<\gamma<4/3$,  the existence of spherically symmetric collapsing
solutions with continuous mass absorption at the origin was established in \cite{Guo-Hadzic-Jang-2021}.

  An interesting recent article \cite{Calogero-2021}
considers the separable (the term homologous is used instead)
motion of self-gravitating elastic balls in the mass critical case $\hd=-1$.  Expanding solutions are
constructed with a solid vacuum  boundary condition, and collapsing solutions
with a gaseous vacuum boundary condition
are predicted on the basis of numerical simulations.

We emphasize that  the present work neglects self-gravitation
and  external forces. 
The sign of the  residual pressure alone determines whether the
body collapses or expands.

\section{Notation and basic assumptions}
\label{notationsec}
We denote by $\mm^3$ the set of $3\times3$ matrices over $\rr$ with the Euclidean inner product
\[
\ip{A}{B}=\tr AB^\top.
\]
We define the groups
\begin{align*}
&\glt=\{F\in\mm^3:\det F>0\}\\
&\slt=\{V\in\glt:\det V=1\}\\
&\sot=\{U\in\slt:U^{-1}=U^\top\}.
\end{align*}

Let 
\begin{subequations}
\begin{equation}
\label{sef}
W:\glt\to[0,\infty)
\end{equation}
be a smooth strain energy function.
We shall assume that $W$ is objective: 
\begin{equation}
\label{objective}
W(F)=W(UF),\isp{for all} F\in\glt,\;U\in\sot,
\end{equation}
and isotropic:
\begin{equation}
\label{isotropic}
W(F)=W(FU),\isp{for all} F\in\glt,\;U\in\sot.
\end{equation}
Conditions \eqref{sef}, 
\eqref{objective}, \eqref{isotropic} allow for spherically symmetric motion.
 Finally, we assume that $W$ 
is scale invariant, that is, it is homogeneous\footnote
{Use of the term {\em homogeneous}  here
should not be confused with
the distinct notion of a {\em homogeneous material}
 which  in continuum mechanics refers to the independence
of the strain
energy function with respect to the material coordinates  in some reference
configuration.  This has been tacitly assumed in \eqref{sef}.} of degree
$\hd$ in $F$ for some $\hd\in\rr$:
\begin{equation}
\label{homogeneity}
\wh(\scp F)=\scp^{\hd}\wh(F),\isp{for all} F\in\glt,\;\scp\in\rr_+.
\end{equation}
\end{subequations}
Homogeneity of $W$ in $F$ is necessary in order to obtain separable motions.
Since $W(I)=\sigma^{-\hd}W(\sigma I)$, $\sigma>0$, and
since we expect on physical grounds that $W(\sigma I)>0$, for $\sigma\ne1$, 
we assume that $W(I)=1$.

Using the polar decomposition, it follows from objectivity \eqref{objective} that
\begin{equation*}
W(F)=W\left((F F^\top)^{1/2}\right),\quad F\in\glt.
\end{equation*}
The positive definite symmetric matrix $A(F)=(F F^\top)^{1/2}$ is called
the  left stretch tensor,  and its  eigenvalues are the principal stretches.

With the additional assumption of homogeneity \eqref{homogeneity}, we have
\begin{equation}
\label{sefhomo1}
W(F)=(\det F)^{\hd/3}W(\Sigma(F)),\quad \isp{for all} F\in\glt,
\end{equation}
where
\begin{equation*}
\label{sefhomo2}
\Sigma(F)=(\det F)^{-1/3}(FF^\top)^{1/2}=\det A(F)^{-1/3}A(F)
\end{equation*}
is called the shear strain tensor.  Note that $\Sigma(F)\in\slt$.
The term $W(\Sigma(F))$ measures the response of the material to shear.

If  $W$ is isotropic \eqref{isotropic}, then
$W(\Sigma(F))$ must be  a function of the principal invariants of $\Sigma(F)$,
(see for example \cite{Ogden-1984}, Section 4.3.4).
Since $\Sigma(F)\in\slt$, the nontrivial invariants are
\begin{subequations}
\begin{equation}
\label{invsst}
\begin{aligned}
&H_1(\Sigma(F))=\tfrac13\tr\Sigma(F)\\
&H_2(\Sigma(F))=\tfrac13\tr\cof\Sigma(F),
\end{aligned}
\end{equation}
with the normalizing factor of $1/3$ included so that $H_i(I)=1$, $i=1,2$.
Thus, under the assumptions \eqref{sef}, \eqref{objective}, \eqref{isotropic}, \eqref{homogeneity},
the strain energy function takes the form
\begin{equation}
\label{oihsef}
W(F)=(\det F)^{\hd/3}\Phi(H_1(\Sigma(F)),H_2(\Sigma(F))),
\end{equation}
for some function
\begin{equation}
\label{oihsef1}
\Phi:\rr^2_+\to[0,\infty),\isp{with}\Phi(1,1)=W(I)=1.
\end{equation}
\end{subequations}

Note that $\det F$ and the invariants of $\Sigma(F)$ depend smoothly
on $F$ (see \cite{Gurtin-1981}, Section 3).
Therefore,  if $\Phi$ is $C^k$, then \eqref{invsst}, \eqref{oihsef}, \eqref{oihsef1} defines a $C^k$ function  $W(F)$ satisfying
\eqref{sef}, \eqref{objective}, \eqref{isotropic}, \eqref{homogeneity}.

Associated to $W$, we define its  (first) Piola-Kirchhoff stress
\begin{subequations}
\begin{equation}
\label{Piola}
S:\glt\to\mm^3,\quad S(F)=\frac{\partial W}{\partial F}(F)
\end{equation}
and Cauchy stress
\begin{equation}
\label{Cauchy}
T:\glt\to\mm^3,\quad T(F)=(\det F)^{-1}S(F)F^\top.
\end{equation}
If $W$ satisfies  \eqref{homogeneity}, then 
by differentiation  with respect to $F$
we find 
\begin{equation}
\label{Piolascaling}
S(\sigma F)=\sigma^{\hd-1}S(F),\isp{for all} F\in\glt,\;\scp\in\rr_+.
\end{equation}

\end{subequations}

\section{Equations of motion for separable solutions}
\label{sepsec}
We shall be concerned with the problem of constructing certain smooth motions of 
an elastic body whose reference configuration $\bb$ is the unit sphere 
\[
\bb=\{y\in\rr^3:|y|<1\}.
\]
  A motion is a
time-dependent family of orientation-preserving deformations $x(t,y)$
\[
x:[0,\tau)\times\bbbar\to\rr^3,
\]
with
\[
 D_yx:[0,\tau)\times\bbbar\to\glt.
\]
The image, $\Omega_t$,
of $\bb$ under the deformation $x(t,\cdot)$ represents the spatial  configuration of an
elastic body at time $t$.  The spatial description of the  body can be given in terms
of the velocity vector ${\mathbf u}(t,x)=D_tx(t,y(t,x))$ and density 
$\varrho(t,x)=\bar\varrho/\det D_yx(t,y(t,x))$ where $y(t,\cdot)=x^{-1}(t,\cdot)$ is the reference
map taking  the spatial domain $\Omega_t$ back to the material domain $\bb$ and
$\bar\varrho>0$ is the constant reference density.

The governing equations of elastodynamics, in the absence of external forces, can be written
in the form
\begin{subequations}
\begin{equation}
\label{elasticityPDE}
\br D_t^2x-D_y\cdot S(D_yx)=0,\isp{in}[0,\tau)\times\bb,
\end{equation}
subject to the nonlinear vacuum boundary condition
\begin{equation}
\label{elasticityBC}
S(D_yx(t,y))\; \omega=0,\isp{on}[0,\tau)\times\partial\bb,
\end{equation}
\end{subequations}
where $\omega=y/|y|$ is the normal at  $y\in\partial\bb$.
The initial conditions
\[
x(0,y),\quad D_tx(0,y),\quad y\in\bbbar
\]
are also  prescribed.  Local well-posedness for this system was
studied in \cite{Koch-1993}, \cite{Shibata-Kikuchi-1989},
 \cite{Shibata-1989},  \cite{Stoppelli-1954}.

\begin{remark}
In the case of  polytropic fluids, 
\[
W(F)=(\det F)^{\hd/3}=(\det F)^{-(\gamma-1)}, 
\quad \gamma>1,
\]
the Cauchy stress is
\[
T(F)=-(\det F)^{-\gamma}I.
\]
The vacuum boundary condition  can only be fulfilled with vanishing
density, i.e.  $(\det F)^{-1}=0$ on $\partial\bb$.  
In the sequel, we shall solely consider the case of 
nonvanishing density on $\partial\bb$, in order that $F\in\glt$ on $\bbbar$.
\end{remark}

We shall now impose the major restriction of separability, namely, that the motion
can be written in the form
\begin{equation}
\label{separability0}
x(t,y)=\tdsf(t)\varphi(y),
\end{equation}
for some scalar function
\[
\tdsf:[0,\tau)\to\rr_+
\]
and a time-independent orientation-preserving deformation
\[
\varphi:\bbbar\to\rr^3\isp{with}D_y\varphi:\bbbar\to\glt.
\]
Thus, the spatial configuration of an elastic body under a separable
motion evolves by dilation, $\omt=\{x=\tdsf(t)y:y\in\bbbar\}$.

In spatial coordinates,
the reference map,  velocity,  and density of a separable motion are self-similar
\begin{equation*}
\begin{aligned}
y(t,x)&=\varphi^{-1}\left(\tdsf(t)^{-1}x\right),\\
{\mathbf u}(t,x)&=\dot \tdsf(t)\varphi(y(t,x))={\dot\tdsf(t)}{\tdsf(t)^{-1}}x,\\
\varrho(t,x)&=\bar\varrho/\det[\tdsf(t) D_y\varphi(y(t,x))]\\
&=\tdsf(t)^{-3}\bar\varrho\det D_x\varphi^{-1}\left(\tdsf(t)^{-1}x\right),
\end{aligned}
\end{equation*}
for $ x\in\omt$.  We shall, however, continue to work in material coordinates.

Henceforth, we take $\br=1$.

\begin{lemma}
\label{separable}
Let $\hd, \mu\in\rr$.  Assume that $W$ satisfies \eqref{sef}, \eqref{homogeneity},
and let $S$ be defined by \eqref{Piola}.

Suppose that $\tdsf\in C^2\left([0,\tau)\right)$ is a positive solution of
\begin{equation}
\label{ODE1}
\ddot\tdsf(t) = \mu\tdsf(t)^{\hd-1},\isp{on}[0,\tau).
\end{equation}
Suppose that $\varphi\in C^2(\bb,\rr^3)\cap C^1(\bbbar,\rr^3)$ 
is an orientation-preserving deformation
which solves
\begin{subequations}
\begin{equation}
\label{elliptic1}
D_y\cdot \pksh(D_y\varphi(y))=\mu\varphi(y),\isp{in}\bb 
\end{equation}
and satisfies the boundary condition
\begin{equation}
\label{BC1}
 S(D_y\varphi(y)) \omega
=0,\isp{on}\partial\bb.
\end{equation}
\end{subequations}
Then
\[
x(t,y)=\tdsf(t)\varphi(y),\quad (t,y)\in[0,\tau)\times\bbbar
\]
is a motion  satisfying the elasticity system \eqref{elasticityPDE}, \eqref{elasticityBC},
with $\br=1$.

In addtion, $\varphi$ satisfies
\[
\mu\int_\bb|\varphi(y)|^2dy
=-\hd\int_\bb W(D\varphi(y))dy,
\]
and $-\hd\mu\ge0$.
\end{lemma}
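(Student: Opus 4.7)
The plan is to verify the three conclusions of the lemma in order, relying only on the induced scaling law \eqref{Piolascaling} of the Piola--Kirchhoff stress, an integration by parts, and Euler's identity for positively homogeneous functions. There is no serious obstacle: the lemma is essentially a bookkeeping calculation confirming that the ansatz $x(t,y)=\tdsf(t)\varphi(y)$ is consistent with the full elasticity system and that the overdetermined pair \eqref{elliptic1}--\eqref{BC1} automatically carries a sign restriction.

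First I would plug $x(t,y)=\tdsf(t)\varphi(y)$ into \eqref{elasticityPDE}--\eqref{elasticityBC}. By \eqref{ODE1}, $D_t^2 x = \ddot\tdsf(t)\,\varphi(y) = \mu\tdsf(t)^{\hd-1}\varphi(y)$. Since $D_y x=\tdsf(t)D_y\varphi$ and $\tdsf(t)>0$, the scaling \eqref{Piolascaling} gives $S(D_y x)=\tdsf(t)^{\hd-1}S(D_y\varphi)$; hence \eqref{elliptic1} yields $D_y\cdot S(D_y x)=\tdsf(t)^{\hd-1}\mu\varphi=D_t^2 x$, so \eqref{elasticityPDE} holds. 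The vacuum condition \eqref{elasticityBC} follows directly from the same scaling together with \eqref{BC1}.

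For the integral identity, I would take the $L^2$ pairing of \eqref{elliptic1} with $\varphi$ and integrate by parts, using \eqref{BC1} to kill the surface term:
\[
\mu\int_\bb|\varphi(y)|^2\,dy = -\int_\bb \ip{D_y\varphi(y)}{S(D_y\varphi(y))}\,dy.
\]
To recast the right-hand side in terms of $W$, I would differentiate the homogeneity relation \eqref{homogeneity} in $\scp$ and set $\scp=1$; this is Euler's identity and yields the pointwise formula $\ip{F}{S(F)}=\hd\,W(F)$ for all $F\in\glt$. Applying it with $F=D_y\varphi(y)$ produces the asserted identity
\[
\mu\int_\bb|\varphi(y)|^2\,dy = -\hd\int_\bb W(D_y\varphi(y))\,dy.
\]

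Finally, the sign statement $-\hd\mu\ge0$ is an immediate consequence: by \eqref{sef} the strain energy satisfies $W\ge0$, so $\int_\bb W(D_y\varphi)\,dy\ge0$, while $\int_\bb|\varphi|^2\,dy>0$ since an orientation-preserving deformation cannot vanish identically. Hence $\mu$ and $-\hd$ must carry the same sign (or one of them vanishes), giving $-\hd\mu\ge0$. The only place where care is needed is the justification of Euler's identity, which I would record once as a lemma-in-passing before invoking it in the integration-by-parts step.
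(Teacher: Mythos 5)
Your proposal is correct and follows essentially the same route as the paper: substitute the separable ansatz and use the scaling \eqref{Piolascaling} to verify \eqref{elasticityPDE}--\eqref{elasticityBC}, then pair \eqref{elliptic1} with $\varphi$, integrate by parts killing the boundary term via \eqref{BC1}, and invoke Euler's identity $\ip{S(F)}{F}=\hd W(F)$ obtained by differentiating \eqref{homogeneity} at $\scp=1$. The sign observation $-\hd\mu\ge0$ from $W\ge0$ and $\int_\bb|\varphi|^2>0$ matches the paper's conclusion.
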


\begin{proof}
Since $\tdsf$  is assumed to be  positive
and $\varphi$ is assumed to be a deformation, $x(t,y)$, as defined, is a motion.

By
\eqref{Piolascaling}, \eqref{ODE1}, \eqref{elliptic1}, the motion $x$ satisfies the 
 system \eqref{elasticityPDE}:
 \begin{multline*}
D_t^2x(t,y)
=\ddot \tdsf(t)\varphi(y)\\
=\mu \tdsf(t)^{\hd-1} \varphi(y)
=\tdsf(t)^{\hd-1}D_y\cdot\pksh(D_y\varphi(y))\\
=D_y\cdot S(\tdsf(t)D_y\varphi(y))
=D_y\cdot S(D_yx(t,y)).
\end{multline*}

The boundary condition \eqref{elasticityBC} is similarly verified using 
\eqref{BC1} and the homogeneity of $S$ in $F$:
\begin{equation*}
S(D_yx(t,y))\omega=S(\tdsf(t)D_y\varphi(y))\omega=\tdsf(t)^{\hd-1}S(D_y\varphi(y))\omega=0.
\end{equation*}

Finally, by \eqref{homogeneity},
\[
\hd W(F)=\tfrac{d}{d\sigma}\sigma^\hd W(F)\big|_{\sigma=1}
=\tfrac{d}{d\sigma}W(\sigma F)\big|_{\sigma=1}=\ip{S(F)}{F},
\]
for all $F\in\glt$.
So, any solution of \eqref{elliptic1}, \eqref{BC1} with $D\varphi\in\glt$ satisfies
\[
\begin{aligned}
\mu\int_\bb|\varphi(y)|^2dy
=&\int_\bb \ip{D\cdot S(D\varphi(y))}{\varphi(y)}dy\\
=&-\int_\bb\ip{S(D\varphi(y))}{D\varphi(y)}dy\\
=&-\hd\int_\bb W(D\varphi(y))dy.
\end{aligned}
\]
\end{proof}

\begin{remark}
The PDE \eqref{elliptic1} is the Euler-Lagrange equation associated to the action
\begin{equation}
\label{action}
\int_\bb\left(W(D_y\varphi)+\tfrac\mu2|\varphi|^2\right)dy.
\end{equation}
\end{remark}

We shall consider the initial value problem for \eqref{ODE1} in the next section.
Sections \ref{ssdsec}--\ref{efunctions} will be devoted to the solution of
eigenvalue problem \eqref{elliptic1},\eqref{BC1}.

\section{Dynamical behavior}

\begin{lemma}
\label{dyn}
If $\tdsf(t)$ is a $C^2$ positive solution of  \eqref{ODE1} with $\hd\ne0$, then the  quantity 
\begin{equation}
\label{scalarenergy}
E(t)=\half \dot\tdsf(t)^2-\tfrac{\mu}{\hd}\tdsf(t)^{\hd}
\end{equation}
is conserved.  

If  $\hd<0$ and   $\mu>0$, 
then for every $(\tdsf(0),\dot \tdsf(0))\in\rr_+\times\rr$,  
the initial value problem for \eqref{ODE1} 
has a positive solution $\tdsf\in C^2\left([0,\infty)\right)$
with $0< (2E(0))^{1/2}-a(t)/t\to0$, as $t\to\infty$.

If  $\hd>1$ and $\mu<0$,   
then for every $(\tdsf(0),\dot \tdsf(0))\in\rr_+\times\rr$,  
the initial value problem for \eqref{ODE1} 
has a positive solution $\tdsf\in C^2\left([0,\tau)\right)$,
with $\tau<\infty$
and
$\tdsf(t)\to0$, as $t\to\tau$.
\end{lemma}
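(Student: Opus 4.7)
The plan is to treat \eqref{ODE1} as a conservative one-dimensional Newtonian system with potential $V(\tdsf):=-\tfrac{\mu}{\hd}\tdsf^{\hd}$ and to analyze it via energy conservation and quadrature. Conservation of $E$ follows immediately by multiplying \eqref{ODE1} by $\dot\tdsf(t)$ and rewriting the right-hand side as $\tfrac{d}{dt}V(\tdsf(t))$, which is legitimate because $\hd\ne 0$.

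For the expansion regime $\hd<0$, $\mu>0$: then $\mu/\hd<0$, so $V>0$ on $\rr_+$ with $V(\tdsf)\to\infty$ as $\tdsf\to 0^+$ and $V(\tdsf)\to 0$ as $\tdsf\to\infty$. Energy conservation reads $\tfrac12\dot\tdsf^2=E(0)-V(\tdsf)$; since $V>0$, this forces $E(0)>0$, the uniform bound $|\dot\tdsf|\le\sqrt{2E(0)}$, and a strict lower bound $\tdsf\ge \tdsf_\ast>0$. Standard continuation then yields a positive $C^2$ solution on $[0,\infty)$. Strict convexity $\ddot\tdsf=\mu\tdsf^{\hd-1}>0$ makes $\dot\tdsf$ strictly increasing, and, combined with the upper bound, $\dot\tdsf\to L\in(-\infty,\sqrt{2E(0)}]$. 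If $L\le 0$ then $\tdsf$ would converge to a finite positive limit, forcing $\ddot\tdsf$ to have a positive limit and contradicting convergence of $\dot\tdsf$. Hence $L>0$, $\tdsf(t)\to\infty$, and energy conservation gives $L=\sqrt{2E(0)}$. Finally $\tdsf(t)/t\to\sqrt{2E(0)}$ by L'H\^opital or by directly integrating $\dot\tdsf$ and dividing by $t$.

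For the collapse regime $\hd>1$, $\mu<0$: again $\mu/\hd<0$, so $V>0$, but now $V(\tdsf)\to 0$ as $\tdsf\to 0^+$ and $V(\tdsf)\to\infty$ as $\tdsf\to\infty$. Conservation bounds $\tdsf$ above by the turning point $\tdsf_M:=(E(0)\hd/\mu)^{1/\hd}<\infty$. Strict concavity $\ddot\tdsf=\mu\tdsf^{\hd-1}<0$ makes $\dot\tdsf$ strictly decreasing; a contradiction argument analogous to the one above shows that $\dot\tdsf$ must eventually become strictly negative. Once $\dot\tdsf<0$, separation of variables gives
\begin{equation*}
\tau-t=\int_0^{\tdsf(t)}\frac{ds}{\sqrt{2E(0)-2V(s)}},
\end{equation*}
and the integrand has an integrable $1/\sqrt{\tdsf_M-s}$ singularity at $s=\tdsf_M$ (arising from the simple zero of $2E(0)-2V(s)$ at $s=\tdsf_M$) and is bounded near $s=0$, so the integral converges and $\tau<\infty$.

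The main obstacle is the collapse statement that $\tdsf$ actually reaches $0$ in finite time rather than asymptotically; this is resolved by the convergent quadrature above. A secondary subtlety in the expansion case is ruling out that $\tdsf$ might limit to a finite positive value, which is handled by the convexity-plus-monotonicity bootstrap sketched above.
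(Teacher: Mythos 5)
Your argument is correct, but it takes a genuinely different route from the paper's in both dynamical regimes. For expansion ($\hd<0$, $\mu>0$), the paper does not appeal directly to convexity of $\tdsf$; it introduces $X(t)=\tfrac12\tdsf(t)^2$, uses the conserved energy to pin $\ddot X$ between $mE(0)$ and $ME(0)$ with $m=\min\{2,-\hd\}$, $M=\max\{2,-\hd\}$, integrates twice to obtain $X(t)\sim t^2$ and hence $\tdsf(t)\sim t$, and only then reads off $\dot\tdsf\to(2E(0))^{1/2}$ from energy conservation. Your route — monotone convergence of $\dot\tdsf$ via convexity, ruling out a nonpositive limit by the bootstrap against the strict lower bound on $\tdsf$, and L'H\^opital — is cleaner and gets the same asymptotics, though the paper's two-sided bounds on $X$ give slightly more quantitative information. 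For collapse ($\hd>1$, $\mu<0$), the paper avoids quadrature altogether: setting $\underaccent{\bar}{\tdsf}=\inf\tdsf$, it uses $\ddot\tdsf\le\mu\underaccent{\bar}{\tdsf}^{\hd-1}$ to get a downward quadratic upper bound on $\tdsf$, which forces $\tau<\infty$ whether $\underaccent{\bar}{\tdsf}>0$ or $\underaccent{\bar}{\tdsf}=0$. Your separation-of-variables argument is an equally valid classical alternative. Three small points on the collapse step: the turning point should read $\tdsf_M=(-\hd E(0)/\mu)^{1/\hd}$ (you dropped the sign, since $\mu<0$); once $\dot\tdsf(t_1)<0$ you automatically have $\tdsf(t_1)<\tdsf_M$, so the integration range $[0,\tdsf(t_1)]$ is bounded away from $\tdsf_M$ and the $1/\sqrt{\tdsf_M-s}$ singularity you discuss never actually enters the integral (the integrand is bounded); and the formula $\tau-t=\int_0^{\tdsf(t)}ds/\sqrt{2(E(0)-V(s))}$ presupposes $\tau<\infty$ and $\tdsf(\tau^-)=0$, so the cleaner logical order is to bound $t-t_1=\int_{\tdsf(t)}^{\tdsf(t_1)}ds/\sqrt{2(E(0)-V(s))}$ uniformly over $t\in[t_1,\tau)$ and only then conclude $\tau<\infty$ and $\tdsf\to 0$. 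None of these affects the substance of your proof.
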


\begin{proof}
If $(\tdsf(0),\dot \tdsf(0))\in\rr_+\times\rr$, then 
the initial value problem for \eqref{ODE1} has a $C^2$ positive solution on a
maximal interval $[0,\tau)$.  If $\tau<\infty$, then either $\tdsf(t)\to0$ or $\tdsf(t)\to\infty$,
as $t\to\tau$.

Conservation of $E(t)$ on the interval $[0,\tau)$ follows directly from \eqref{ODE1}.

Assume that $\hd<0$ and $\mu>0$.  Then by \eqref{scalarenergy},  $\tdsf(t)^{-1}$  and $|\dot\tdsf(t)|$ are bounded above by
some constant $C_0$ on $[0,\tau)$.  This implies that $C_0^{-1}\le \tdsf(t)\le\tdsf(0)+C_0t$,
on $[0,\tau)$.  It follows that  $\tau=+\infty$.

Let $X(t)=\half\tdsf(t)^2$.  Then
\begin{equation*}
\ddot X(t)=\dot\tdsf(t)^2+\mu\tdsf(t)^{\hd}.
\end{equation*}
From \eqref{scalarenergy}, we obtain
\begin{subequations}
\begin{equation}
m E(0)\le \ddot X(t)\le M E(0),
\end{equation}
in which 
\[
m=\min\{2,-\hd\}\isp{and} M=\max\{2,-\hd\}.
\]
This leads to the bounds
\begin{equation}
\label{xdotbd}
m E(0)t\le \dot X(t)-\dot X(0)\le ME(0)t
\end{equation}
and
\begin{equation}
\label{xbd}
m E(0)t^2\le X(t)-\dot X(0)t-X(0)\le ME(0)t^2.
\end{equation}
\end{subequations}

By \eqref{xdotbd}, we see that 
\begin{equation*}
\dot a(t)=\dot X(t)/\tdsf(t)>0,\isp{for}t>-\dot X(0)/(mE(0)),
\end{equation*}
and by
\eqref{xbd}, we deduce that 
\begin{equation*}
\tdsf(t)=(2X(t))^{1/2}\sim t, \isp{as} t\to\infty.
\end{equation*}
With these facts, $E(t)=E(0)$ implies that 
\begin{equation*}
0< (2E(0))^{1/2}-\dot\tdsf(t)\to0, \isp{as} t\to\infty,
\end{equation*}
from which follows the asymptotic statement.

Now assume that $\hd>1$ and  $\mu<0$.
Let $\underaccent{\bar}{\tdsf}=\inf\{a(t):t\in[0,\tau)\}$.
Since $\mu<0$, \eqref{ODE1} implies that
\[
\ddot a(t)=\mu a(t)^{\hd-1}(t)\le \mu \underaccent{\bar}{\tdsf}^{\hd-1}
\isp{ on} [0,\tau),
\]
and  so  
we obtain
\begin{equation}
\label{tdsfub}
0\le \tdsf(t)\le \tdsf(t_0)+\dot\tdsf(t_0)(t-t_0)+\half\mu\underaccent{\bar}{\tdsf}^{\hd-1} (t-t_0)^2,
\end{equation}
for all $0\le t_0\le t<\tau$.
If $\underaccent{\bar}{\tdsf}>0$,
then, since $\mu<0$, it follows from \eqref{tdsfub}
that $\tau<\infty$.  On the other hand,
if $\underaccent{\bar}{\tdsf}=0$, then since $a(0)>0$, 
there exists a $t_0\in[0,\tau)$ such that $\dot\tdsf(t_0)<0$, whence from \eqref{tdsfub}
again there holds $\tau<\infty$.  
\end{proof}

\begin{remark}
When $\hd>1$ and $\mu<0$, the time of collapse is given by
\begin{equation*}
\tau=
\begin{cases}
\displaystyle
\int_0^{\tdsf(0)}\left[2\left(E(0)+\tfrac{\mu}{\hd}s^\hd\right)\right]^{-1/2}ds,
&\text{if $\dot\tdsf(0)\le0$}\\
\ \\
\displaystyle
\left(\int_0^{a\left(\dot a^{-1}(0)\right)}+\int_{\tdsf(0)}^{a\left(\dot a^{-1}(0)\right)}\right)
\left[2\left(E(0)+\tfrac{\mu}{\hd}s^\hd\right)\right]^{-1/2}ds,
&\text{if $\dot\tdsf(0)>0$}.
\end{cases}
\end{equation*}

\end{remark}

\begin{remark}
In Lemma \ref{dyn}, we have only  discussed the qualitative behavior of solutions to
equation \eqref{ODE1} for the parameter range $-\hd\mu>0$ from Lemma \ref{separable}
in which we can construct separable solutions
to \eqref{elasticityPDE}, \eqref{elasticityBC}.
\end{remark}

\section{Spherically symmetric deformations}
\label{ssdsec}

\begin{lemma}
\label{ssdgform}
If 
\begin{equation}
\label{phicond}
\phi\in C^2\left([0,1]\right), \quad\phi(0)=\phi''(0)=0,  \isp{and} \phi'>0 \isp{on} [0,1], 
\end{equation}
then
\[
\varphi(y)=\phi(r)\omega,\quad r=|y|,\quad \omega=y/|y|
\]
defines an orientation-preserving deformation $\varphi\in C^2(\bbbar,\rr^3)$.

The function
\begin{equation*}
\label{lambpos}
\lambda(r)=(\lambda_1(r),\lambda_2(r))=(\phi'(r),\phi(r)/r)
\end{equation*}
belongs to $C^1([0,1],\rr_+^2)$, positivity holds: $\lambda_1(r),\lambda_2(r)>0$ on $[0,1]$, and 
$\lambda(0)=(\phi'(0),\phi'(0))$.

The deformation gradient of $\varphi$, $D_y\varphi:\bbbar\to\glt$,
  is given by
\begin{subequations}
\begin{equation}
\label{sph-sym-def-gr}
D_y\varphi(y)
=F(\lambda(r),\omega),\quad r=|y|,\quad \omega=y/|y|,
\end{equation}
with
\begin{equation}
\label{ssdg}
F(\lambda,\omega)=\lambda_1P_1(\omega)+\lambda_2P_2(\omega),\quad\lambda\in\rr_+^2,\quad \omega\in\ S^2,
\end{equation}
and
\begin{equation}
\label{projections}
P_1(\omega)=\omega\otimes\omega,\quad  P_2(\omega)=I-P_1(\omega).
\end{equation}
\end{subequations}
\end{lemma}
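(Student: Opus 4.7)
The proof splits into three tasks: establishing the regularity and positivity of $\lambda=(\phi',\phi/r)$; deriving the formula \eqref{sph-sym-def-gr}--\eqref{projections} for $D_y\varphi$ and its orientation-preserving character; and verifying $\varphi\in C^2(\bbbar,\rr^3)$ along with injectivity.

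For $\lambda$, the component $\lambda_1=\phi'$ lies in $C^1([0,1])$ by hypothesis. For $\lambda_2(r)=\phi(r)/r$, I would use Taylor's theorem with integral remainder together with $\phi(0)=0$ to write
\[
\phi(r) = \phi'(0)\,r + \int_0^r (r-s)\phi''(s)\,ds,
\]
from which $\lambda_2(r)\to\phi'(0)$ as $r\to 0^+$, and differentiation yields $\lambda_2'(r) = r^{-2}\int_0^r s\,\phi''(s)\,ds$. Since $\phi''(0)=0$, L'Hopital's rule gives $\lambda_2'(r)\to 0$ as $r\to 0^+$, so $\lambda\in C^1([0,1],\rr_+^2)$ with $\lambda(0)=(\phi'(0),\phi'(0))$. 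Strict positivity of $\lambda_1$ is given; for $\lambda_2$, use $\phi(r)=\int_0^r\phi'(s)\,ds>0$ on $(0,1]$ and $\lambda_2(0)=\phi'(0)>0$.

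The formula for $D_y\varphi$ follows for $y\neq 0$ from a direct chain-rule computation on $\varphi_i(y)=\phi(r)\,y_i/r$, giving
\[
\partial_j\varphi_i(y) = \phi'(r)\omega_i\omega_j + (\phi(r)/r)(\delta_{ij}-\omega_i\omega_j) = \lambda_1(r)P_1(\omega)_{ij} + \lambda_2(r)P_2(\omega)_{ij}.
\]
Since $P_1,P_2$ are orthogonal projections onto complementary subspaces of dimensions $1$ and $2$, $F(\lambda,\omega)$ has eigenvalues $\lambda_1$ (simple) and $\lambda_2$ (double), hence $\det D_y\varphi=\lambda_1\lambda_2^2>0$ and $D_y\varphi(y)\in\glt$. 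As $r\to 0$ the formula extends continuously to $D_y\varphi(0)=\phi'(0)I\in\glt$. Injectivity follows from the strict monotonicity of $\phi$ (since $\phi'>0$): $\varphi(y_1)=\varphi(y_2)$ forces $\phi(|y_1|)=\phi(|y_2|)$ by taking norms, hence $|y_1|=|y_2|$, and then the unit vectors must agree.

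The main obstacle is the $C^2$ regularity at the origin. Away from $y=0$ it is immediate. At the origin, I would rewrite $\partial_j\varphi_i=\lambda_2(r)\delta_{ij}+(\lambda_1-\lambda_2)(r)\omega_i\omega_j$ and differentiate: each term in $\partial_k\partial_j\varphi_i$ is a bounded directional factor times one of $\lambda_2'(r)$, $(\lambda_1-\lambda_2)'(r)$, or $(\lambda_1-\lambda_2)(r)/r$. The potentially singular ratio $(\lambda_1-\lambda_2)(r)/r = r^{-2}\int_0^r s\,\phi''(s)\,ds$, which arises from differentiating $\omega_i\omega_j$, is precisely the term controlled by the assumption $\phi''(0)=0$; L'Hopital's rule forces it, and the other two coefficients, to tend to $0$ as $r\to 0$. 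Hence $\partial_k\partial_j\varphi_i$ extends continuously to $y=0$ with value $0$, and a standard argument (integrating along radial lines) then upgrades the continuous extension to the existence of genuine second partials at the origin. Without the condition $\phi''(0)=0$, the singular $1/r$ factors coming from $\partial\omega$ would leave $\varphi$ only $C^1$ at the origin, so this condition is essential.
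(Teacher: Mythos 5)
Your proof is correct and follows essentially the same route as the paper's: both reduce $C^2$ regularity at the origin to showing that the coefficient functions $\lambda_2'(r)$, $(\lambda_1-\lambda_2)(r)/r$, and $(\lambda_1-\lambda_2)'(r)$ in the second-derivative formula all tend to zero as $r\to 0$, which is exactly what the hypotheses $\phi(0)=\phi''(0)=0$ guarantee. The only differences are cosmetic: the paper extracts these limits from the representation $\phi(r)/r=\int_0^1\phi'(sr)\,ds$ whereas you use Taylor's theorem with integral remainder and L'Hopital, and you make explicit the standard final step of upgrading continuous extension of the partials to genuine differentiability at the origin, which the paper leaves implicit.
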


\begin{proof}
By writing
\[
\phi(r)=\int_0^1D_s\left(\phi(sr)\right)ds=r\int_0^1\phi'(sr)ds,
\]
we see that the function 
\[
\lambda_2(r)=\phi(r)/r=\int_0^1\phi'(sr)ds
\]
 is strictly positive and $C^1$ on $(0,1]$
with 
\[
\lambda_2(r)\to\phi'(0)>0,\quad
 \lambda_2'(r)=\int_0^1s\phi''(sr)ds\to\half\phi''(0)=0,\isp{as}r\to0.
\]
Thus, $\lambda_2(r)$ extends to a strictly positive   function
in $C^1([0,1])$.  It follows that the function
\[
\chi(r)=\phi'(r)-\phi(r)/r=\lambda_1(r)-\lambda_2(r)
\]
belongs to $C^1([0,1])$, and $\chi(0)=\chi'(0)=0$.

Clearly, $\varphi\in C^2(\bbbar\setminus\{0\})$, so to prove that $\varphi\in
C^2(\bbbar)$, we need only show that the derivatives up to second order
extend continuously to the origin.

The formula \eqref{sph-sym-def-gr} for $D_y\varphi(y)$ is easily verified for $y\ne0$,
and it equivalent to
\begin{subequations}
\begin{equation}
\label{d1phi}
D_y\varphi(y)
=\chi(r)P_1(\omega)+\lambda_2(r)I.
\end{equation}
This implies  that
\[
D_y\varphi(y)\to\phi'(0)I,\isp{as} y\to0,
\]
and so $\varphi\in C^1(\bbbar)$.

From the formula \eqref{sph-sym-def-gr}, we also see that $D_y\varphi(y)$ has the eigenspaces
$\spn\{\omega\}$ and $\spn\{\omega\}^\perp$, with strictly positive eigenvalues
$\lambda_1(r)$, $\lambda_2(r)$, $\lambda_2(r)$.  Thus, 
\[
\det D_y\varphi(y)=\lambda_1(r)\lambda_2(r)^2, 
\isp{for all} y\in\bbbar,
\]
which shows that $D_y\varphi:\bbbar\to\glt$.  Since $\phi'>0$, we see that $\varphi$ is
a bijection of $\bbbar$ onto its range.  So we conclude that $\varphi$ is a $C^1$ 
orientation-preserving  deformation on $\bbbar$.

It remains to show that $D_y\varphi\in C^1(\bbbar)$.  
For $y\ne0$, we find from \eqref{d1phi} that
\begin{multline}
\label{d2phi}
D_jD_k\varphi_i(y)=(\chi'(r)-2\chi(r)/r)\omega_i\omega_j\omega_k\\
+(\chi(r)/r)\left(\delta_{ij}\omega_k+\delta_{ik}\omega_j+\delta_{jk}\omega_i\right).
\end{multline}
\end{subequations}
Since $\chi(0)=\chi'(0)=0$, we have that $\chi'(r), \;\chi(r)/r\to\chi'(0)=0$,
as $r\to 0$.
Thus,
we see that the second derivatives of $\varphi$  extend to  continuous functions on  $\bbbar$
which  vanish at the origin.
\end{proof}

A deformation of the form $\varphi(y)=\phi(r)\omega$
is said to be {\em spherically symmetric}.
From now on, we focus exclusively upon $C^2$ spherically symmetric
orientation-preserving deformations
of the reference domain $\bbbar$ where $\phi$ satisfies \eqref{phicond}.  

\begin{remark}
The spatial configuration of a body at time $t$ under
a spherically symmetric separable motion 
$x(t,y)=\tdsf(t)\phi(r)\omega$, defined on $[0,\tau)\times\bbbar$,  
is a sphere of
radius $a(t)\phi(1)$.
\end{remark}

The next  result summarizes the properties
 of the gradient of a spherically symmetric deformation.

\begin{lemma}
\label{flomdef}
The matrix $F(\lambda,\omega)$ defined in \eqref{ssdg}, \eqref{projections}
satisfies
\begin{itemize}
\item
the eigenspaces of $F(\lambda,\omega)$ are $\spn\{\omega\}$ and $\spn\{\omega\}^\perp$,
\item
the eigenvalues of $F(\lambda,\omega)$ are $\lambda_1,\lambda_2,\lambda_2$,
\item
$\det F(\lambda,\omega)=\lambda_1\lambda_2^2$,
\item
$F:\rr_+^2\times S^2\to\glt$,
\item
$F(\lambda,\omega)$ is positive definite  symmetric, and
\item
$F(\lambda,\omega)=\left(F(\lambda,\omega)F(\lambda,\omega)^\top\right)^{1/2}=A(F(\lambda,\omega))$.
\end{itemize}
\end{lemma}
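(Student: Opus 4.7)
The plan is to verify each of the six listed properties by direct computation using the algebraic identities satisfied by the orthogonal projections $P_1(\omega)$ and $P_2(\omega)$. The key facts I would record at the outset are: each $P_i(\omega)$ is symmetric and idempotent, $P_1(\omega)+P_2(\omega)=I$, $P_1(\omega)P_2(\omega)=0$, and $P_1(\omega)\omega=\omega$, $P_2(\omega)v=v$ for every $v$ with $\omega\cdot v=0$. Everything else follows from these identities.

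First I would compute $F(\lambda,\omega)\omega=\lambda_1\omega$ and $F(\lambda,\omega)v=\lambda_2 v$ for any $v\in\spn\{\omega\}^\perp$. This simultaneously establishes the eigenspace decomposition and yields the eigenvalues $\lambda_1,\lambda_2,\lambda_2$, where the multiplicity two comes from $\dim\spn\{\omega\}^\perp=2$ inside $\rr^3$. The determinant, being the product of the eigenvalues counted with multiplicity, is $\lambda_1\lambda_2^2$. Since $\lambda_1,\lambda_2>0$, this determinant is strictly positive, so $F(\lambda,\omega)\in\glt$, giving the fourth bullet.

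Symmetry of $F(\lambda,\omega)$ is inherited from the symmetry of $P_1(\omega)$ and $P_2(\omega)$. Positive definiteness is read off directly from the quadratic form: for any $v\in\rr^3$, the orthogonal decomposition $v=(\omega\cdot v)\omega+w$ with $w\perp\omega$ gives
\[
\ip{F(\lambda,\omega)v}{v}=\lambda_1(\omega\cdot v)^2+\lambda_2|w|^2,
\]
which is strictly positive whenever $v\ne0$ (alternatively this follows at once from the positive eigenvalues). For the final identity, since $F(\lambda,\omega)$ is symmetric one has $F(\lambda,\omega)F(\lambda,\omega)^\top=F(\lambda,\omega)^2$, and by uniqueness of the positive definite symmetric square root of a positive definite symmetric matrix, $(F(\lambda,\omega)^2)^{1/2}=F(\lambda,\omega)$, so $A(F(\lambda,\omega))=F(\lambda,\omega)$ by definition.

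There is no substantive obstacle; the lemma is organizational and every item reduces to the projection identities above. The only spot deserving a brief sentence is the multiplicity assertion for $\lambda_2$, which requires nothing beyond the dimension count of $\spn\{\omega\}^\perp$ in $\rr^3$.
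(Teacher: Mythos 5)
Your proof is correct and is exactly the routine verification the paper has in mind; the paper simply states the lemma as a ``summary of properties'' with no written proof, treating all six items as immediate from the projection identities $P_1(\omega)+P_2(\omega)=I$, $P_1(\omega)P_2(\omega)=0$, and the symmetry and idempotency of each $P_i(\omega)$. Your argument supplies precisely those checks (eigenvector computation for the eigenspaces and eigenvalues, product of eigenvalues for the determinant, quadratic-form positivity, and uniqueness of the positive definite symmetric square root for the last identity), so there is nothing to compare against and no gap.
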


\section{Spherically symmetric strain energy and stress}

\begin{lemma}
\label{sph-sym-str-en}
Let $W$ be a strain energy function satisfying \eqref{sef}, \eqref{objective}, \eqref{isotropic}.
If $F(\lambda,\omega)$ 
 is given by \eqref{ssdg}, \eqref{projections},
then $W\circ F(\lambda,\omega)$ is independent of $\omega$.
\end{lemma}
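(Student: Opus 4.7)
The plan is to exhibit, for any two directions $\omega,\omega'\in S^2$, a rotation relating $F(\lambda,\omega)$ and $F(\lambda,\omega')$, and then apply objectivity and isotropy to collapse this rotation.

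First I would establish the transformation law for the projections. Given $\omega,\omega'\in S^2$, pick $U\in\sot$ with $U\omega=\omega'$ (this exists since $\sot$ acts transitively on $S^2$). Using $P_1(\omega)=\omega\otimes\omega$, a direct computation gives
\[
U P_1(\omega) U^\top = (U\omega)\otimes(U\omega)=P_1(\omega'),
\]
and since $P_2 = I-P_1$ and $UIU^\top=I$, also $U P_2(\omega)U^\top = P_2(\omega')$. Linearity of $F$ in the two projections, as written in \eqref{ssdg}, then yields
\[
F(\lambda,\omega') = U F(\lambda,\omega) U^\top.
\]

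Next I would use the two invariance hypotheses on $W$ to remove the conjugating rotation. Setting $G=F(\lambda,\omega)$, isotropy \eqref{isotropic} applied with the rotation $U^\top\in\sot$ gives $W(UG)=W((UG)U^\top)=W(UGU^\top)$, and objectivity \eqref{objective} applied with the rotation $U^\top\in\sot$ gives $W(UG)=W(U^\top U G)=W(G)$. Combining,
\[
W(F(\lambda,\omega'))=W(UGU^\top)=W(G)=W(F(\lambda,\omega)).
\]
Since $\omega,\omega'$ were arbitrary, $W\circ F(\lambda,\omega)$ is independent of $\omega$, as claimed.

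There is really no hard step here; the only point requiring a little care is the verification that $U P_1(\omega) U^\top = P_1(U\omega)$, which is immediate from the tensor product form of $P_1$, together with the bookkeeping of applying objectivity and isotropy in the correct order. An alternative, essentially equivalent route would invoke Lemma \ref{flomdef}: since $F(\lambda,\omega)$ is symmetric positive definite with principal stretches $(\lambda_1,\lambda_2,\lambda_2)$ independent of $\omega$, and since $W$ restricted to symmetric positive definite matrices factors through the principal stretches by objectivity plus isotropy, the conclusion follows; but the rotation argument above makes the $\omega$-independence completely explicit in a single line.
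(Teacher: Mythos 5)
Your proof is correct and follows essentially the same route as the paper: both observe the conjugation identity $F(\lambda,\omega')=UF(\lambda,\omega)U^\top$ for a rotation $U$ mapping $\omega$ to $\omega'$, and then invoke objectivity and isotropy to strip off $U$ and $U^\top$. You simply spell out the projection transformation and the order of applying \eqref{objective} and \eqref{isotropic} in a bit more detail than the paper does.
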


\begin{proof}
Fix a vector $\omega_0\in S^2$.  For an arbitrary vector $\omega\in S^2$, 
choose  $U\in\sot$ such that $\omega=U\omega_0 $.
Then
\[
F(\lambda,\omega)=F(\lambda,U\omega_0 )=\lambda_1 P_1(U\omega_0 )
+\lambda_2P_2(U\omega_0 )=UF(\lambda, \omega_0 )U^\top.
\]
From \eqref{objective} and \eqref{isotropic}, we obtain
\[
W\circ F(\lambda,\omega)=W\circ F(\lambda,\omega_0 ),
\]
which is independent of $\omega$.
\end{proof}
Using the result of Lemma \ref{sph-sym-str-en}, we may define
a $C^2$ function $\llll$ by 
\begin{subequations}
\begin{equation}
\label{ssse}
\llll:\rr_+^2\to[0,\infty),\quad 
\llll(\lambda)=W\circ F(\lambda,\omega).
\end{equation}
In other words, $\llll$ is the restriction of $W$ to the set of spherically symmetric
deformation gradients.  By  \eqref{ssdg}, \eqref{homogeneity}, $\llll$ scales like $W$:
\begin{equation}
\label{ellscaling}
\llll(\scp\lambda)=W(F(\scp\lambda,\omega))=W(\scp F(\lambda,\omega))
=\scp^\hd\llll(\lambda),\quad \scp\in\rr_+.
\end{equation}
\end{subequations}

We now obtain expressions for the 
stresses  
restricted to the set of spherically symmetric deformation gradients.
\begin{lemma}
\label{P-K-str}
If $W$ satisfies \eqref{sef}, \eqref{objective}, \eqref{isotropic},
and $\llll$ is defined by \eqref{ssse},
then the Piola-Kirchhoff stress defined in \eqref{Piola} satisfies
\begin{subequations}
\begin{equation}
\label{sspksell}
S\circ F(\lambda,\omega)=\llll_{,1}(\lambda)P_1(\omega)+\half\llll_{,2}(\lambda)P_2(\omega),
\end{equation}
and the Cauchy stress defined in \eqref{Cauchy} satisfies
\begin{equation}
\label{sscs0}
T\circ F(\lambda,\omega)=(\lambda_1\lambda_2^2)^{-1}\left[\lambda_1\llll_{,1}(\lambda)P_1(\omega)
+\half\lambda_2\llll_{,2}(\lambda)P_2(\omega)\right].
\end{equation}
\end{subequations}
\end{lemma}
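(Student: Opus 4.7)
The plan is to determine $S \circ F(\lambda,\omega)$ by combining two ingredients: a chain rule identification of two scalar components via differentiation of $\mathcal{L}$, and a symmetry argument based on objectivity and isotropy to force $S(F(\lambda,\omega))$ to lie in the span of $P_1(\omega)$ and $P_2(\omega)$. Once the Piola-Kirchhoff formula is in hand, the Cauchy stress formula will follow from a direct matrix computation using \eqref{Cauchy}.

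First I would fix $\omega_0 \in S^2$ and argue the structural form $S(F(\lambda,\omega_0)) = s_1 P_1(\omega_0) + s_2 P_2(\omega_0)$. Differentiating the objectivity relation $W(UF)=W(F)$ and the isotropy relation $W(FU)=W(F)$ in $F$ gives
\[
S(UF) = US(F), \qquad S(FU)=S(F)U, \qquad U\in\sot.
\]
For any $U\in\sot$ fixing $\omega_0$, the formula \eqref{ssdg} yields $UF(\lambda,\omega_0)U^\top=F(\lambda,\omega_0)$, hence $UF=FU$, and combining the two identities above gives $US(F)=S(F)U$. Thus $S(F)$ commutes with every rotation about the $\omega_0$ axis, which forces $S(F)$ to preserve $\spn\{\omega_0\}$ and $\spn\{\omega_0\}^\perp$ and to act as a scalar on the latter (by transitivity of the rotation action on the unit circle in $\spn\{\omega_0\}^\perp$). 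This gives the claimed form.

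Next I would compute $s_1$ and $s_2$ via the chain rule. Since $\partial_{\lambda_1}F(\lambda,\omega)=P_1(\omega)$ and $\partial_{\lambda_2}F(\lambda,\omega)=P_2(\omega)$, the definition \eqref{ssse} gives
\[
\llll_{,i}(\lambda) = \ip{S(F(\lambda,\omega_0))}{P_i(\omega_0)},\quad i=1,2.
\]
Using $\tr P_1=1$, $\tr P_2=2$, $P_1P_2=0$, this yields $s_1=\llll_{,1}(\lambda)$ and $s_2=\tfrac12\llll_{,2}(\lambda)$, which is \eqref{sspksell} at $\omega_0$. To pass to arbitrary $\omega$, I would pick $U\in\sot$ with $U\omega_0=\omega$; then $F(\lambda,\omega)=UF(\lambda,\omega_0)U^\top$, so $S(F(\lambda,\omega))=US(F(\lambda,\omega_0))U^\top$, and conjugating $P_i(\omega_0)$ by $U$ reproduces $P_i(\omega)$, finishing \eqref{sspksell}.

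Finally, for the Cauchy stress I would just apply \eqref{Cauchy} with $\det F(\lambda,\omega)=\lambda_1\lambda_2^2$ (Lemma \ref{flomdef}) and $F(\lambda,\omega)^\top=F(\lambda,\omega)=\lambda_1P_1(\omega)+\lambda_2P_2(\omega)$; the orthogonality $P_1P_2=P_2P_1=0$ together with $P_i^2=P_i$ collapses the product $S\circ F(\lambda,\omega)\cdot F(\lambda,\omega)^\top$ to $\lambda_1\llll_{,1}(\lambda)P_1(\omega)+\tfrac12\lambda_2\llll_{,2}(\lambda)P_2(\omega)$, giving \eqref{sscs0}. The only nonroutine step is the symmetry argument that pins down the form of $S$; the rest is bookkeeping with the projection algebra.
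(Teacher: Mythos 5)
Your overall strategy --- derive the structural form $S\circ F(\lambda,\omega)\in\spn\{P_1(\omega),P_2(\omega)\}$ from symmetry, then pin down the coefficients via $\llll_{,i}(\lambda)=\ip{S\circ F(\lambda,\omega)}{P_i(\omega)}$ --- is a legitimate alternative to the paper's, and the chain-rule and Cauchy-stress steps are fine. But the symmetry argument, as you state it, has a gap. From $U\in\sot$ fixing $\omega_0$ you get $US(F)=S(F)U$, and you conclude that $S(F)$ must act as a scalar on $\spn\{\omega_0\}^\perp$ ``by transitivity.'' That inference fails: such $U$ restrict to $\sot|_{\spn\{\omega_0\}^\perp}\cong {\rm SO}(2)$, and the commutant of ${\rm SO}(2)$ in $2\times 2$ matrices is two-dimensional, $\{\alpha I+\beta J\}$ where $J$ is the $\pi/2$-rotation. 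Transitivity of the circle action only shows $|S(F)v|$ is constant over unit $v\perp\omega_0$, which any conformal (rotation-plus-scaling) block satisfies. Worse, the antisymmetric piece $\beta J$ is invisible to your coefficient computation, since $\tr(JP_2)=0$ forces $\ip{J}{P_2}=0$; so your argument would silently certify \eqref{sspksell} even if $\beta\ne0$, in which case the identity would actually be false.

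The gap is easy to close, and either fix preserves your self-contained approach. The relation $UF(\lambda,\omega_0)U^\top=F(\lambda,\omega_0)$ holds for all $U\in\sot$ with $U\omega_0=\pm\omega_0$ (because $P_1(-\omega_0)=P_1(\omega_0)$); those with $U\omega_0=-\omega_0$ act on $\spn\{\omega_0\}^\perp$ as reflections, so $S(F)$ must commute with all of ${\rm O}(2)$ there, whose commutant is exactly the scalars. Alternatively, note that objectivity and isotropy give $W(F)=W(F^\top)$, hence $S(F)^\top=S(F^\top)$; since $F(\lambda,\omega_0)$ is symmetric, $S\circ F(\lambda,\omega_0)$ is symmetric, which also kills the $J$ component. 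With either repair your proof works and is genuinely different from the paper's: the paper invokes the representation theorem from Ogden (Thm.\ 4.2.5) that $T(F)\in\spn\{I,A(F),A(F)^2\}$ for isotropic objective $W$, and transfers to $S$ via \eqref{Cauchy}, obtaining symmetry and the span condition in one stroke from a cited result; your route reconstructs the structural form directly from the invariance identities $S(UF)=US(F)$ and $S(FU)=S(F)U$, at the price of having to handle the antisymmetric direction explicitly.
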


\begin{proof}
Differentiation of \eqref{ssse} with respect to $\lambda$ yields
\begin{equation}
\label{LagrangianDerivatives}
\begin{aligned}
&\llll_{,1}(\lambda)=\ip{S\circ F(\lambda,\omega)}{P_1(\omega)}\\
&\llll_{,2}(\lambda)=\ip{S\circ F(\lambda,\omega)}{P_2(\omega)},
\end{aligned}
\end{equation}
where $\langle\cdot,\cdot\rangle$ is the Euclidean product on $\mm^3$.

It is a standard fact (\cite{Ogden-1984}, Theorem 4.2.5)
that the Cauchy stress $T(F)$  associated to an objective
and isotropic 
strain energy function satisfies 
\[
T(F)\in\spn\{I,A(F),A(F)^2\}.
\]
  By Lemma \ref{flomdef}, we have
  \[
A(F(\lambda,\omega))
=F(\lambda,\omega).
\]
Since
\[
F(\lambda_1,\lambda_2,\omega)^k=F(\lambda_1^k,\lambda_2^k,\omega)
\in\spn\{P_1(\omega),P_2(\omega)\},\quad k\in\mathbb Z,
\]
we obtain
\[
T\circ F(\lambda,\omega)\in\spn\{P_1(\omega),P_2(\omega)\}.
\]
By \eqref{Cauchy},  
there also holds
\[
S\circ F(\lambda,\omega)\in\spn\{P_1(\omega),P_2(\omega)\}.
\]
Hence, we may write
\[
S\circ F(\lambda,\omega)=c_1(\lambda,\omega)P_1(\omega)+c_2(\lambda,\omega)P_2(\omega).
\]
Taking the $\mm^3$-inner product with $P_1(\omega)$ and $P_2(\omega)$,
we have from \eqref{LagrangianDerivatives}
\[
\llll_{,1}(\lambda)=c_1(\lambda,\omega)\isp{and} \llll_{,2}(\lambda)=2c_2(\lambda,\omega).
\]
 This proves \eqref{sspksell}, and
\eqref{sscs0} now follows from \eqref{Cauchy} and Lemma \ref{flomdef}.

\end{proof}

\begin{corollary}
Under the assumptions of Lemma \ref{P-K-str}, there holds
\begin{equation}
\label{residualpressure}
\llll_{,1}(\alpha ,\alpha)=\half\llll_{,2}(\alpha ,\alpha), \isp{for all} \alpha>0.
\end{equation}

\end{corollary}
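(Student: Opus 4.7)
The approach will be short and direct, relying on the fact that setting $\lambda_1=\lambda_2=\alpha$ collapses all $\omega$-dependence in $F(\lambda,\omega)$.

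First I would observe that, since $P_1(\omega)+P_2(\omega)=I$ by \eqref{projections}, evaluating \eqref{ssdg} at $\lambda=(\alpha,\alpha)$ gives
\[
F(\alpha,\alpha,\omega)=\alpha P_1(\omega)+\alpha P_2(\omega)=\alpha I,
\]
independently of $\omega\in S^2$. Consequently the left-hand side of \eqref{sspksell}, evaluated at this $\lambda$, reduces to $S(\alpha I)$, which likewise does not depend on $\omega$.

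Next I would substitute $P_2(\omega)=I-P_1(\omega)$ into the right-hand side of \eqref{sspksell} at $\lambda=(\alpha,\alpha)$ to rewrite it as
\[
S(\alpha I)=\tfrac12\llll_{,2}(\alpha,\alpha)\,I+\bigl[\llll_{,1}(\alpha,\alpha)-\tfrac12\llll_{,2}(\alpha,\alpha)\bigr]P_1(\omega).
\]
The left-hand side is $\omega$-independent, while $P_1(\omega)=\omega\otimes\omega$ genuinely depends on $\omega$ (for instance, choosing two distinct unit vectors $\omega,\omega'$ yields $P_1(\omega)\ne P_1(\omega')$). Hence the bracketed coefficient must vanish, which is exactly \eqref{residualpressure}.

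There is essentially no obstacle to overcome: the only thing to watch is that the identification in \eqref{sspksell} really is pointwise in $\omega$ (which it is, since the formula was derived for each fixed $\omega$), and that $P_1(\omega)$ is a nontrivial function of $\omega$ so that matching the two sides for all unit vectors forces its coefficient to be zero. One could alternatively arrive at the same conclusion by invoking \eqref{LagrangianDerivatives}, using that $S(\alpha I)$ commutes with every $U\in\sot$ (hence is a scalar multiple of $I$ by Schur's lemma) and that $\tr P_1(\omega)=1$ while $\tr P_2(\omega)=2$, but the argument above avoids even that small detour.
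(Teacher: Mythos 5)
Your proof is correct and takes essentially the same approach as the paper: both exploit that $S(\alpha I)$ is a fixed matrix while the $P_1(\omega)$ term on the right-hand side of \eqref{sspksell} varies with $\omega$, forcing its coefficient to vanish. The paper phrases the contradiction as $P_1(\omega)$ being discontinuous at the origin when viewed as a function of $y$; your phrasing via comparing two distinct unit vectors $\omega,\omega'$ is the same observation stated slightly more directly.
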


\begin{proof}
For any $\alpha>0$, the map $\varphi(y)=\alpha y=\alpha r\omega$ is a smooth spherically
symmetric deformation, and $D_y\varphi(y)=\alpha I$.  Since $S:\glt\to\mm^3$ is $C^1$,
the map $S(\alpha I)$ is $C^1$ in $\alpha$. By \eqref{sspksell}, we have
\begin{align*}
S(\alpha I)&=\llll_{,1}(\alpha ,\alpha)P_1(\omega)+\half\llll_{,2}(\alpha ,\alpha)P_2(\omega)\\
&=\left(\llll_{,1}(\alpha ,\alpha)-\half\llll_{,2}(\alpha ,\alpha)\right)P_1(\omega)+
\half\llll_{,2}(\alpha ,\alpha)I
\end{align*}
 Now $P_1(\omega)$ is bounded and discontinuous at the origin, so \eqref{residualpressure} 
must hold.
\end{proof}

\begin{corollary}
\label{residualstress}
Under the assumptions of Lemma \ref{P-K-str}, there holds
\begin{equation*}
T(\alpha I)=\alpha^{-2}\llll_{,1}(\alpha,\alpha)I\equiv-\mathcal P(\alpha) I.
\end{equation*}
\end{corollary}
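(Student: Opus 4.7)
The plan is to derive the formula for $T(\alpha I)$ by directly specializing the general Cauchy stress formula \eqref{sscs0} established in Lemma \ref{P-K-str} to the isotropic dilation $\varphi(y) = \alpha y$, and then using the preceding corollary \eqref{residualpressure} to collapse the two coefficient terms into a single scalar multiple of the identity.

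Concretely, I observe that $D_y\varphi(y) = \alpha I = F(\alpha,\alpha,\omega)$ in the notation of \eqref{ssdg}, i.e.\ the case $\lambda_1 = \lambda_2 = \alpha$. Substituting $\lambda = (\alpha,\alpha)$ into \eqref{sscs0} gives
\begin{equation*}
T(\alpha I) = \alpha^{-3}\bigl[\alpha\,\llll_{,1}(\alpha,\alpha)P_1(\omega) + \tfrac12\alpha\,\llll_{,2}(\alpha,\alpha)P_2(\omega)\bigr] = \alpha^{-2}\bigl[\llll_{,1}(\alpha,\alpha)P_1(\omega) + \tfrac12\llll_{,2}(\alpha,\alpha)P_2(\omega)\bigr].
\end{equation*}

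Now I invoke the identity \eqref{residualpressure} from the previous corollary, namely $\llll_{,1}(\alpha,\alpha) = \tfrac12\llll_{,2}(\alpha,\alpha)$, so the two coefficients in the bracket coincide. Using $P_1(\omega) + P_2(\omega) = I$ from \eqref{projections}, the bracket reduces to $\llll_{,1}(\alpha,\alpha)\,I$, yielding $T(\alpha I) = \alpha^{-2}\llll_{,1}(\alpha,\alpha)\,I$, which is the claim. The definition $\mathcal P(\alpha) = -\alpha^{-2}\llll_{,1}(\alpha,\alpha)$ is then read off as the associated pressure.

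There is essentially no obstacle here: the entire content is algebraic substitution, and the only nontrivial ingredient — the coincidence of the two spectral coefficients at $\lambda_1 = \lambda_2$ — has just been proved in the preceding corollary precisely to make this reduction possible. The step that deserves a line of comment in the write-up is simply noting why the bracket collapses to a multiple of the identity (namely, $P_1+P_2 = I$), so that the Cauchy stress at $\alpha I$ is a pure pressure, consistent with physical expectations for an isotropic material under uniform dilation.
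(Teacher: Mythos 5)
Your proof is correct and is exactly the calculation the paper has in mind: substitute $\lambda=(\alpha,\alpha)$ into \eqref{sscs0}, invoke \eqref{residualpressure} to equate the two spectral coefficients, and use $P_1+P_2=I$. The paper's own proof is a one-line citation of Lemma \ref{P-K-str} together with the preceding corollary (there is a self-referential typo in the paper's citation, which clearly should point to the corollary establishing \eqref{residualpressure}); you have simply written out the details that the paper leaves implicit.
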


\begin{proof}
This follows directly from Lemma \ref{P-K-str} and Corollary \ref{residualstress}.
\end{proof}

We define the residual stress to be $S(I)=T(I)=-\mathcal P(1)I$.
We shall refer to $\mathcal P(1)$
 as  the {\em residual pressure}.

\begin{corollary}
\label{Div-Piola}
Under the assumptions and notation of Lemmas \ref{ssdgform} and  \ref{P-K-str},
we have for any spherically symmetric orientation-preserving deformation $\varphi$
\begin{equation}
\label{sspks}
S(D_y\varphi(y))=\llll_{,1}(\lambda(r))P_1(\omega)+\half \llll_{,2}(\lambda(r))P_2(\omega)
\end{equation}
and
\begin{multline*}
T(D_y\varphi(y))
=\left(\lambda_1(r)\lambda_2(r)^2\right)^{-1}\\
\times\left(\lambda_1(r) \llll_{,1}(\lambda(r))P_1(\omega)
+\half\lambda_2(r) \llll_{,2}(\lambda(r))P_2(\omega)\right).
\end{multline*}
\end{corollary}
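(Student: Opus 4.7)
The plan is to observe that this corollary is essentially a direct composition of the two preceding lemmas: Lemma \ref{ssdgform} expresses $D_y\varphi(y)$ in the form $F(\lambda(r),\omega)$ to which the formulas of Lemma \ref{P-K-str} apply verbatim. Almost no new work is required beyond assembling the substitution.

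Concretely, I would first fix $y\in\bbbar\setminus\{0\}$ and invoke Lemma \ref{ssdgform} to write
\begin{equation*}
D_y\varphi(y)=F(\lambda(r),\omega),\quad r=|y|,\quad \omega=y/|y|,
\end{equation*}
with $\lambda(r)=(\phi'(r),\phi(r)/r)\in\rr_+^2$ and $F(\lambda,\omega)=\lambda_1P_1(\omega)+\lambda_2P_2(\omega)$. Then I would feed this expression into \eqref{sspksell} of Lemma \ref{P-K-str}, which gives
\begin{equation*}
S(D_y\varphi(y))=S\circ F(\lambda(r),\omega)=\llll_{,1}(\lambda(r))P_1(\omega)+\half\llll_{,2}(\lambda(r))P_2(\omega),
\end{equation*}
establishing \eqref{sspks}. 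The formula for $T(D_y\varphi(y))$ is obtained the same way from \eqref{sscs0}, or equivalently by applying the definition \eqref{Cauchy} together with the identities $F(\lambda(r),\omega)^\top=F(\lambda(r),\omega)$ (from Lemma \ref{flomdef}), $P_i(\omega)^2=P_i(\omega)$, $P_1(\omega)P_2(\omega)=0$, and $\det F(\lambda(r),\omega)=\lambda_1(r)\lambda_2(r)^2$.

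The only point that needs a brief comment is the value at $y=0$, where $\omega$ is not defined. Here I would note that by Lemma \ref{ssdgform} we have $D_y\varphi(0)=\phi'(0)I$ and that the right-hand sides of the two claimed formulas, though written in terms of $P_1(\omega)$ and $P_2(\omega)$, collapse to a scalar multiple of $I$ when $\lambda_1(0)=\lambda_2(0)=\phi'(0)$ because $P_1(\omega)+P_2(\omega)=I$ and, in addition, \eqref{residualpressure} equates the two coefficients at $\lambda=(\phi'(0),\phi'(0))$. Thus the formulas extend continuously to the origin and hold on all of $\bbbar$. Since every step is a direct substitution plus this continuity remark, I do not anticipate any genuine obstacle; the main thing to get right is being careful about the behavior at $r=0$.
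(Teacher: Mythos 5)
Your proof is correct and follows the same route the paper intends: the corollary is a direct substitution of the expression $D_y\varphi(y)=F(\lambda(r),\omega)$ from Lemma \ref{ssdgform} into the formulas \eqref{sspksell} and \eqref{sscs0} of Lemma \ref{P-K-str} (the paper in fact states this corollary without a separate proof). Your additional observation about continuity at $r=0$, using $\lambda_1(0)=\lambda_2(0)=\phi'(0)$ together with \eqref{residualpressure} to collapse the formula to a multiple of $I$, is a worthwhile clarification of a point the paper leaves implicit.
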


\section{The nonlinear eigenvalue problem}
\begin{lemma}
\label{evp2}
Suppose that  $W$ satisfies \eqref{sef}, \eqref{objective}, \eqref{isotropic} and
that $\llll$ is defined by \eqref{ssse}.

Let $\mu\in\rr$.
If $\phi$ satisfies \eqref{phicond} and
\begin{subequations}
\begin{multline}
\label{ssODE1}
D_r[\llll_{,1}(\phi'(r),\phi(r)/r)]\\+\tfrac2r[\llll_{,1}(\phi'(r),\phi(r)/r)-\half\llll_{,2}(\phi'(r),\phi(r)/r)]\\
=\mu\phi(r),
\quad r\in[0,1)
\end{multline}
then $\varphi(y)=\phi(r)\omega$ 
is a $C^2$ spherically symmetric orientation-preserving
 deformation on $\bbbar$ which solves \eqref{elliptic1}.

If 
\begin{equation}
\label{ssBC1}
\left.\llll_{,1}(\phi'(r),\phi(r)/r)\right|_{r=1}=0,
\end{equation}
\end{subequations}
then $\varphi$ satisfies the boundary condition \eqref{BC1}.
\end{lemma}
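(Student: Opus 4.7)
The smoothness, orientation-preserving, and spherically symmetric deformation properties of $\varphi(y)=\phi(r)\omega$ are immediate consequences of Lemma \ref{ssdgform}, since the hypotheses \eqref{phicond} are exactly those required there. The substantive content is verifying the PDE \eqref{elliptic1} and the boundary condition \eqref{BC1}.

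The plan for the PDE is to compute $D_y\cdot S(D_y\varphi(y))$ pointwise on $\bb\setminus\{0\}$ using the decomposition from Corollary \ref{Div-Piola}. Writing $f(r)=\llll_{,1}(\lambda(r))$ and $g(r)=\half\llll_{,2}(\lambda(r))$, formula \eqref{sspks} gives
\[
S(D_y\varphi(y))_{ij}=[f(r)-g(r)]\,y_iy_j/r^2+g(r)\delta_{ij}.
\]
A direct calculation, using $\partial_j r=y_j/r$, $\partial_j y_j=3$, and $y_jy_j=r^2$, yields
\[
[D_y\cdot S(D_y\varphi(y))]_i=\Bigl[f'(r)+\tfrac{2}{r}(f(r)-g(r))\Bigr]\omega_i
\]
for $y\ne 0$. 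The bracketed expression is exactly the left-hand side of the ODE \eqref{ssODE1}, so it equals $\mu\phi(r)$, and multiplying by $\omega_i$ recovers $\mu\varphi(y)_i$. Hence \eqref{elliptic1} holds on $\bb\setminus\{0\}$.

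The step I expect to be slightly delicate is extending the PDE to $y=0$ in a classical sense, since the formula above contains the factor $\omega$, which is discontinuous at the origin. The key observation is that $\varphi\in C^2(\bbbar)$ and $W$ is smooth, so $S(D_y\varphi)$ lies in $C^1(\bbbar)$, and its divergence therefore extends continuously to the origin. The Corollary following Lemma \ref{P-K-str} guarantees $\llll_{,1}(\alpha,\alpha)=\half\llll_{,2}(\alpha,\alpha)$, so $(f-g)(0)=0$ and the quotient $(f-g)/r$ has a finite limit as $r\to 0$; hence the ODE \eqref{ssODE1} makes sense at $r=0$ and forces $f'(r)+\tfrac{2}{r}(f(r)-g(r))\to\mu\phi(0)=0$. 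Both sides of \eqref{elliptic1} thus vanish at the origin, completing the identity on all of $\bb$.

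For the boundary condition, on $\partial\bb$ the outward unit normal coincides with $\omega=y/|y|=y$. Since $P_1(\omega)\omega=\omega$ and $P_2(\omega)\omega=0$, evaluating \eqref{sspks} at $r=1$ gives
\[
S(D_y\varphi(y))\omega=\llll_{,1}(\phi'(1),\phi(1))\,\omega,
\]
which vanishes precisely under the assumption \eqref{ssBC1}. This verifies \eqref{BC1} and concludes the proof.
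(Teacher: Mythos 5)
Your proof is correct and follows essentially the same route as the paper: decompose $S(D_y\varphi)$ via Corollary \ref{Div-Piola} into $P_1(\omega)$ and identity (or $P_2(\omega)$) components, differentiate in coordinates using $D_jr=\omega_j$, $D_j\omega_j=2/r$, and match the resulting radial ODE to \eqref{ssODE1}; the boundary condition then falls out from $P_1(\omega)\omega=\omega$, $P_2(\omega)\omega=0$. The one place you go beyond the paper is the explicit discussion of what happens at $y=0$: the paper silently relies on the fact that $\varphi\in C^2(\bbbar)$ together with the smoothness of $S$ makes $D_y\cdot S(D_y\varphi)$ continuous up to the origin, so the identity established on $\bb\setminus\{0\}$ extends by continuity. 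Your argument also correctly invokes the corollary $\llll_{,1}(\alpha,\alpha)=\tfrac12\llll_{,2}(\alpha,\alpha)$ (and the $C^1$ regularity of $\lambda(r)$ from Lemma \ref{ssdgform}) to see that the apparent singular term $\tfrac2r(f-g)$ has a finite limit, so that \eqref{ssODE1} is meaningful at $r=0$. This is a genuine refinement of exposition rather than a different method; both approaches prove the same thing the same way, and yours simply makes a tacit continuity step explicit.
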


\begin{proof}
Set
\[
\Lambda_k(r)=\llll_{,k}(\phi'(r),\phi(r)/r),\quad k=1,2,
\]
so that by \eqref{sspks},
\[
S(D_y\varphi(y))
=\left(\Lambda_1(r)-\half \Lambda_2(r)\right)P_1(\omega)+\half \Lambda_2(r)I.
\]

Then, using the facts  $D_r=\omega_jD_j$, $D_j\omega_j=2/r$, and $D_jr=\omega_j$, we have

\begin{align*}
[D_y\cdot & S(D_y\varphi(y))]_i\\
={}&\left[D_y\cdot \left(\left(\Lambda_1(r)-\half \Lambda_2(r)\right)P_1(\omega)+\half \Lambda_2(r)I\right)\right]_i\\
={}&D_j\Big(\left(\Lambda_1(r)-\half \Lambda_2(r)\right)\omega_i\omega_j+\half \Lambda_2(r)\delta_{ij}\Big)\\
={}&D_r\left(\Lambda_1(r)-\half \Lambda_2(r)\right)\omega_i
+\left(\Lambda_1(r)-\half \Lambda_2(r)\right)\omega_iD_j\omega_j+\half D_i\Lambda_2(r)\\
={}&\Big( \Lambda_1'(r)+\tfrac2r( \Lambda_1(r)-\half \Lambda_2(r))\Big)\omega_i.
\end{align*}
Thus, \eqref{ssODE1} implies that \eqref{elliptic1} holds.

If $y\in\partial\bb$, then $r=1$, so we have from \eqref{sspksell}
\[
\left.S(D_y\varphi(y))\;\omega\right|_{y\in\partial\bb}=\Lambda_1(1)\omega.
\]
So \eqref{ssBC1} implies \eqref{BC1}.
\end{proof}

\begin{remark}
The ODE \eqref{ssODE1} is the Euler-Lagrange equation associated to the action
\[
4\pi\int_0^1\left[\llll\left(\phi'(r),\phi(r)/r\right)+\tfrac\mu2\phi(r)^2\right]r^2dr,
\]
which is the action \eqref{action} restricted to the set of spherically symmetric deformations.
\end{remark}

\begin{remark}
If $\llll^0:\rr_+^2\to[0,\infty)$ is $C^2$ and
\begin{equation}
\label{nullLag}
\llll^0_{,11}(\lambda)=0,\quad
(\lambda_1-\lambda_2)\llll^0_{,12}(\lambda)
+2\llll^0_{,1}(\lambda)-\llll^0_{,2}(\lambda)=0,\quad \lambda\in\rr_+^2,
\end{equation}
then \eqref{ssODE1} is unchanged by replacing $\llll$ with $\llll+\llll^0$.
Condition \eqref{nullLag} holds provided there exists a 
pair of $C^2$ functions $\nullf_i:\rr_+\to\rr$, $i=0,1$, such that
\begin{gather*}
\llll^0(\lambda)=\nullf_1(\lambda_2)\lambda_1+\nullf_0(\lambda_2)\\
\intertext{and}
 -\lambda_2\nullf_1'(\lambda_2)+2\nullf_1(\lambda_2)-\nullf_0'(\lambda_2)=0,
\end{gather*}
for all $\lambda\in\rr_+^2$.
If $W^0$ satisfies \eqref{sef}, \eqref{objective}, \eqref{isotropic} and if
$W^0$ is a null Lagrangian, see  \cite{Ball-1981}, then
$\llll^0=W^0\circ F(\lambda,\omega)$ satisfies \eqref{nullLag}.
Thus, we can think of  solutions of \eqref{nullLag} as  being spherically symmetric null Lagrangians.

\end{remark}

\section{Strain energy with scaling invariance}
\label{sewsi}

It is convenient to define the quantities 
\begin{subequations}
\begin{equation}
\label{lamudef}
\volume=\det F(\lambda,\omega)=\lambda_1\lambda_2^2, \quad
u=\lambda_1/\lambda_2,\quad \lambda\in\rr_+^2,
\end{equation}
so that
\begin{equation}
\label{lamuinv}
\lambda_1=\volume^{1/3}u^{2/3},\quad \lambda_2=\volume^{1/3}u^{-1/3},
\quad (u,v)\in\rr_+^2.
\end{equation}
\end{subequations}
Note that $u=1$ if and only if $\lambda_1=\lambda_2$ if and only if
 $F(\lambda,\omega)$ is a multiple of the
identity if and only if $\Sigma(F(\lambda,\omega))=I$.

\begin{lemma}
\label{constlem}
If $W$ is a $C^k$ strain energy function 
satisfying \eqref{sef}, \eqref{objective}, \eqref{isotropic}, \eqref{homogeneity},
and $\llll$ is defined by \eqref{ssse},
then there exists a $C^k$ function 
\[
f:\rr_+\to[0,\infty),
\isp{with}
f'(1)=0,
\]
such that 
\begin{equation}
\label{sssefrep}
\llll(\lambda)
=\volume^{\hd/3}f(u),\isp{for all}\lambda\in\rr_+^2.
\end{equation}
  \end{lemma}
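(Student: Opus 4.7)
The plan is to implement a change of variables from $\lambda=(\lambda_1,\lambda_2)\in\rr_+^2$ to $(\volume,u)$ via \eqref{lamudef}--\eqref{lamuinv}, and then read off the claimed representation directly from the scaling law \eqref{ellscaling}.

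First I would observe that the map $(\volume,u)\mapsto\lambda=(\volume^{1/3}u^{2/3},\volume^{1/3}u^{-1/3})$ is a $C^\infty$ diffeomorphism of $\rr_+^2$ onto itself, with inverse given by \eqref{lamudef}. Hence the composition $g(\volume,u)=\llll(\lambda(\volume,u))$ inherits the $C^k$ regularity of $\llll$, and $g\ge0$ since $\llll\ge0$. Under the scaling $\lambda\mapsto\scp\lambda$, the invariants transform as $\volume\mapsto\scp^3\volume$ and $u\mapsto u$; so \eqref{ellscaling} becomes
\begin{equation*}
g(\scp^3\volume,u)=\scp^\hd g(\volume,u),\quad \scp\in\rr_+,\ (\volume,u)\in\rr_+^2.
\end{equation*}
Specializing to $\scp=\volume^{-1/3}$ yields $g(\volume,u)=\volume^{\hd/3}g(1,u)$. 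I would then define $f(u)=g(1,u)=\llll(u^{2/3},u^{-1/3})$, which is nonnegative and $C^k$ by the above, and this gives the representation \eqref{sssefrep}.

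The remaining step is to verify $f'(1)=0$. By the chain rule,
\begin{equation*}
f'(u)=\tfrac23 u^{-1/3}\llll_{,1}(u^{2/3},u^{-1/3})-\tfrac13 u^{-4/3}\llll_{,2}(u^{2/3},u^{-1/3}),
\end{equation*}
and evaluating at $u=1$ gives $f'(1)=\tfrac23\llll_{,1}(1,1)-\tfrac13\llll_{,2}(1,1)=\tfrac23\bigl(\llll_{,1}(1,1)-\tfrac12\llll_{,2}(1,1)\bigr)$, which vanishes by the identity \eqref{residualpressure} at $\alpha=1$.

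The only step with any real content is the derivative calculation, and even that is essentially forced once one realizes that $f'(1)=0$ is exactly the spherically-symmetric shadow of the constraint \eqref{residualpressure} that was already extracted from the smoothness of $S$ at the identity. The homogeneity of $\llll$ does all the structural work of collapsing a two-variable function to a one-variable one; \eqref{residualpressure} supplies the only remaining input needed for the boundary statement $f'(1)=0$. I do not anticipate any serious obstacle.
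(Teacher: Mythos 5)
Your argument is correct and follows essentially the same route as the paper: both define $f(u)=\llll(u^{2/3},u^{-1/3})$, obtain the representation \eqref{sssefrep} from the homogeneity \eqref{ellscaling} via the substitution $\scp=\volume^{-1/3}$, and deduce $f'(1)=0$ from the chain rule together with the identity \eqref{residualpressure}. Your intermediate introduction of $g(\volume,u)$ is only a minor expository variation on the paper's more direct computation.
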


\begin{proof}
Define
\begin{equation*}
f(u)=\llll(u^{2/3},u^{-1/3}),\quad u\in\rr_+.
\end{equation*}
Then $f$ is $C^k$ and nonnegative, by \eqref{ssse}.   
It follows from  Corollary \ref{residualpressure} that 
\begin{equation*}
f'(1)=\tfrac23\llll_{,1}(1,1)-\third\llll_{,2}(1,1)=0.
\end{equation*}
By \eqref{lamuinv}  and \eqref{ellscaling}, we have
\begin{equation*}
\llll(\lambda)=\llll(v^{1/3}u^{2/3},v^{1/3}u^{-1/3})=v^{\hd/3}\llll(u^{2/3},u^{-1/3})=v^{\hd/3}f(u),
\end{equation*}
so that \eqref{sssefrep} holds.
\end{proof}

A partial converse to Lemma \ref{constlem} will be given in
Theorem \ref{thm2}.
We shall also show, in Proposition \ref{BEineq}, that
condition \eqref{sssefrep} is physically plausible, insofar as it is consistent with 
the Baker-Ericksen inequality, when $f$ is convex.

\begin{remark}
We have by \eqref{sefhomo1}
\begin{equation}
\label{sefrestrss}
W(\Sigma(F(\lambda,\omega)))= \det F(\lambda,\omega)^{-\hd/3}W(F(\lambda,\omega))=f(u).
\end{equation}
That is, $f$ is the restriction of $W(\Sigma(F))$ to the spherically symmetric
deformation gradients.
\end{remark}

\begin{remark}
In the polytropic fluid case, $W(F)=(\det F)^{-(\gamma-1)}$, we see that
 $f(u)=1$.
\end{remark}

Next, we explore the implications  of \eqref{sssefrep} for the equation \eqref{ssODE1}.

\begin{lemma}
\label{evp3}
Suppose that  $W$ is a $C^3$ strain energy function which
satisfies \eqref{sef}, \eqref{objective}, \eqref{isotropic}, \eqref{homogeneity} and
that $\llll$ is defined by \eqref{ssse}.  By Lemma \ref{constlem}, there is a $C^3$ function 
\[
f:\rr_+\to[0,\infty),
\isp{with}
f'(1)=0,
\]
such that 
\begin{equation*}
\llll(\lambda)
=\volume^{\hd/3}f(u),\isp{for all}\lambda\in\rr_+^2.
\end{equation*}
Define the $C^1$ functions
\begin{subequations}
\begin{equation}
\label{biguforms}
\begin{aligned}
&U_1(u)=\kappa(\hd)f(u)+\tfrac{2\hd}3uf'(u)+u^2f''(u),\\
&U_2(u)=2\kk(\hd)uf(u)+(\tfrac\hd3-1)u^2f'(u)\\
&\phantom{U_2(u)=\tfrac{2\hd}3(\tfrac\hd3-}
+(2u^2+u^3)\frac{f'(u)}{u-1}-u^3f''(u),
\end{aligned}
\end{equation}
in which 
\begin{equation}
\label{bmdef}
\kk(\hd)=\tfrac{\hd}3\left(\tfrac\hd3-1\right).
\end{equation}

Suppose that $\phi$ satisfies \eqref{phicond}, and define the positive functions
\begin{equation}
\label{ellofr}
\lambda(r)=(\lambda_1(r),\lambda_2(r))=(\phi'(r),\phi(r)/r),
\end{equation}
and
\begin{equation}
\label{uvofr}
\volume(r)=\lambda_1(r)\lambda_2(r)^2,\quad u(r)=\lambda_1(r)/\lambda_2(r).
\end{equation}
If $\phi$ solves the equation
\end{subequations}
\begin{subequations}
\begin{multline}
\label{ODE2.0}
U_1(u(r))\phi''(r)+
U_2(u(r))
\frac1r\left(\phi'(r)-\phi(r)/r\right)\\
=\mu \; r \; \volume(r)^{-\hd/3+1} u(r),\quad r\in[0,1)
\end{multline}
and the boundary condition
\begin{equation}
\label{sshBC0}
g(u(1))=0,\isp{with}  g(u)\equiv\tfrac\hd3f(u)+uf'(u),
\end{equation}
\end{subequations}
then it also solves   \eqref{ssODE1},  \eqref{ssBC1}
and $\varphi(y)=\phi(r)\omega$ is a $C^2$ spherically symmetric 
orientation-preserving deformation
which solves \eqref{elliptic1}, \eqref{BC1}.

\end{lemma}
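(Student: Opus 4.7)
The assertion is essentially an algebraic restatement of the eigenvalue problem \eqref{ssODE1}, \eqref{ssBC1} under the factorization $\llll(\lambda) = \volume^{\hd/3}f(u)$ provided by Lemma \ref{constlem}; once this restatement is established, Lemma \ref{evp2} immediately delivers the full conclusion for $\varphi$. The plan is therefore to verify the two equivalences (interior equation and boundary condition) by direct chain-rule computation in the $(\volume,u)$ parametrization.

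First I would compute $\llll_{,1}$ and $\llll_{,2}$. Using $\partial \volume/\partial\lambda_1 = \lambda_2^2$ and $\partial u/\partial\lambda_1 = 1/\lambda_2$, together with the analogous $\lambda_2$-derivatives, a short simplification gives
$$\llll_{,1} = \volume^{\hd/3}\lambda_1^{-1}\bigl[(\hd/3)f(u) + uf'(u)\bigr] = \volume^{\hd/3}\lambda_1^{-1}g(u),$$
together with a parallel expression for $\llll_{,2}$. Because $\volume(1),\lambda_1(1)>0$, the boundary condition \eqref{ssBC1} is equivalent to $g(u(1))=0$, i.e.\ to \eqref{sshBC0}. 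Differentiating $\llll_{,1}$ once more with respect to $\lambda_1$, and using $\kk(\hd)$ from \eqref{bmdef}, yields $\llll_{,11} = \volume^{\hd/3}\lambda_1^{-2}U_1(u)$ with $U_1$ exactly as in \eqref{biguforms}.

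For the interior equation, the chain rule (with $\lambda_1'(r)=\phi''$ and $\lambda_2'(r)=(1/r)(\lambda_1-\lambda_2)$) converts \eqref{ssODE1} into
$$\llll_{,11}\phi'' + \Bigl\{\llll_{,12}(\lambda_1 - \lambda_2) + 2\bigl[\llll_{,1} - \tfrac{1}{2}\llll_{,2}\bigr]\Bigr\}\tfrac{1}{r} = \mu\phi.$$
The technical heart of the proof is recasting the braced expression as $\volume^{\hd/3}\lambda_1^{-2}U_2(u)(\lambda_1 - \lambda_2)$. Since $\llll_{,1} - \tfrac{1}{2}\llll_{,2}$ does \emph{not} vanish at $\lambda_1 = \lambda_2$ (its value there is proportional to the residual pressure from Corollary \ref{residualstress}), the factor $\lambda_1 - \lambda_2$ cannot be extracted naively. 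After expansion, the $f$-contributions consolidate into explicit multiples of $\lambda_1 - \lambda_2 = \lambda_2(u-1)$, while the $f'$-contributions are handled via the factorization $f'(u) = (u-1)\cdot[f'(u)/(u-1)]$. This step is legitimate precisely because $f'(1)=0$ from Lemma \ref{constlem} (ultimately a consequence of the residual-pressure identity \eqref{residualpressure}), so the quotient $f'(u)/(u-1)$ is $C^1$; it is this quotient that produces the summand $(2u^2 + u^3)f'(u)/(u-1)$ appearing in $U_2$.

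Putting everything together, the left side of \eqref{ssODE1} becomes $\volume^{\hd/3}\lambda_1^{-2}[U_1(u)\phi'' + U_2(u)(1/r)(\phi' - \phi/r)]$, while the right side satisfies $\mu\phi = \mu r\lambda_2 = \volume^{\hd/3}\lambda_1^{-2}\cdot\mu r\volume^{-\hd/3+1}u$. Dividing by the strictly positive factor $\volume^{\hd/3}\lambda_1^{-2}$ converts \eqref{ssODE1} into \eqref{ODE2.0}, and the argument is reversible, so \eqref{ODE2.0}+\eqref{sshBC0} implies \eqref{ssODE1}+\eqref{ssBC1}; applying Lemma \ref{evp2} then completes the proof. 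The main obstacle, as indicated, is extracting $\lambda_1 - \lambda_2$ from a stress-imbalance term that does not vanish on the identity deformation; the key realization is that the only obstruction to this extraction, the $f'(u)$ contribution, vanishes at $u=1$ by $f'(1)=0$, so that the removable-singularity quotient $f'(u)/(u-1)$ absorbs it cleanly into the $C^1$ coefficient $U_2(u)$.
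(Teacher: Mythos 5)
Your proposal follows the paper's proof essentially verbatim: compute the $\llll$-derivatives via the $(\volume,u)$ chain rule, identify $U_1$, $U_2$ so that \eqref{ssODE1} divides through by the positive factor $\volume^{\hd/3}\lambda_1^{-2}$ to give \eqref{ODE2.0}, observe that the boundary condition reduces to $g(u(1))=0$, and invoke Lemma \ref{evp2}. The computation is correct and the key observation, that $f'(1)=0$ makes $f'(u)/(u-1)$ a $C^1$ coefficient so that $\lambda_1-\lambda_2$ can be extracted from the spherical divergence term, is exactly what the paper does.

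One parenthetical remark in your write-up is factually wrong and should be fixed: you assert that $\llll_{,1}-\tfrac12\llll_{,2}$ does \emph{not} vanish at $\lambda_1=\lambda_2$, citing the residual pressure. But Corollary \ref{residualpressure} states precisely that $\llll_{,1}(\alpha,\alpha)=\tfrac12\llll_{,2}(\alpha,\alpha)$ for all $\alpha>0$, so this combination \emph{does} vanish on the diagonal $u=1$; what is nonzero there is $\llll_{,1}(\alpha,\alpha)$ alone, which is what Corollary \ref{residualstress} identifies with $-\alpha^2\mathcal P(\alpha)$. Indeed your own subsequent calculation shows $\llll_{,1}-\tfrac12\llll_{,2}$ is a multiple of $\tfrac\hd3(1-u)f(u)+(u+\tfrac12 u^2)f'(u)$, which manifestly vanishes at $u=1$ once $f'(1)=0$ is used. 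The obstruction you should be pointing to is not nonvanishing but \emph{non-manifest} vanishing: the cancellation requires $f'(1)=0$, and making the quotient regular is what produces the $(2u^2+u^3)f'(u)/(u-1)$ summand in $U_2$. The rest of the argument is unaffected.
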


\begin{proof}
Carrying out the differentiation in \eqref{ssODE1}, we may write
\begin{multline*}
\llll_{,11}(\lambda(r))\;\phi''(r)+\llll_{,12}(\lambda(r))\;\tfrac1r\left(\phi'(r)-\phi(r)/r\right)\\
+\tfrac2r[\llll_{,1}(\lambda(r))-\half\llll_{,2}(\lambda(r))]=\mu \phi(r).
\end{multline*}
Since
\[
\phi(r)=rv(r)^{1/3}u(r)^{-1/3}\isp{and} \phi'(r)-\phi(r)/r=v(r)^{1/3}u(r)^{-1/3}(u(r)-1),
\]
this is equivalent to 
\begin{multline}
\label{ODE1.2}
\llll_{,11}(\lambda(r))\;\phi''(r)+\llll_{,12}(\lambda(r))\;\tfrac1r\left(\phi'(r)-\phi(r)/r\right)\\
+ v(r)^{-1/3}u(r)^{1/3}\left[\frac{2\llll_{,1}(\lambda(r))-\llll_{,2}(\lambda(r))}{u(r)-1}\right]
\tfrac1r\left(\phi'(r)-\phi(r)/r\right)\\
=\mu\; r \;v(r)^{1/3}u(r)^{-1/3}.
\end{multline}
From \eqref{lamudef}, we derive
\begin{align*}
&\partial_{\lambda_1}=\volume_{,1}\partial_\volume+u_{,1}\partial_u
=\volume^{-1/3}u^{-2/3}(\volume\partial_\volume+u\partial_u)\\
&\partial_{\lambda_2}=\volume_{,2}\partial_\volume+u_{,2}\partial_u
=\volume^{-1/3}u^{1/3}(2\volume\partial_\volume-u\partial_u).
\end{align*}

Direct computation from \eqref{sssefrep} yields
\begin{equation}
\label{ellforms}
\begin{aligned}
&\llll_{,1}(\lambda)=\volume^{(\hd-1)/3}u^{-2/3}[\tfrac\hd3f(u)+uf'(u)]\\
&\llll_{,2}(\lambda)=\volume^{(\hd-1)/3}u^{-2/3}[\tfrac{2\hd}3uf(u)-u^2f'(u)]\\
&\llll_{,11}(\lambda)\\
&\quad
=\volume^{(\hd-2)/3}u^{-4/3}\left[\kappa(\hd)f(u)+\tfrac{2\hd}3uf'(u)
+u^2f''(u)\right]\\
&\llll_{,12}(\lambda)\\
&\quad=\volume^{(\hd-2)/3}u^{-1/3}[2(\tfrac\hd3)^2f(u)+(\tfrac\hd3-1) uf'(u)
-u^2f''(u)].
\end{aligned}
\end{equation}
Upon substitution of \eqref{ellforms} relations into \eqref{ODE1.2},
we obtain \eqref{ODE2.0} 
after a bit of simplification.

By \eqref{ellforms}, the boundary condition \eqref{ssBC1} is equivalent to
\begin{equation*}
\volume(1)^{(\hd-1)/3}u(1)^{-2/3}\left[\tfrac\hd3f(u(1))+u(1)f'(u(1))\right]=0,
\end{equation*}
which reduces to \eqref{sshBC0}.

This shows  that the problems        
\eqref{ODE2.0},  \eqref{sshBC0} 
and  \eqref{ssODE1},  \eqref{ssBC1} are equivalent.  By Lemma \ref{evp2}, $\varphi(y)=\phi(r)\omega$
is a $C^2$ spherically symmetric 
orientation-preserving deformation
which solves \eqref{elliptic1}, \eqref{BC1}.
\end{proof}

\begin{remark}
The quantity $\kk(\hd)$ defined in \eqref{bmdef}
corresponds to the bulk modulus, as we shall explain in Lemma \ref{moduli}.
Materials with a negative bulk modulus are uncommon,
and therefore  it is reasonable physically to assume $\kk(\hd)>0$.  In the next lemma, 
we will also see that positivity of $\kk(\hd)$
relates to the coercivity of the differential operator in \eqref{ODE2.0}, and hence
the hyperbolicity of the  equations of motion.
\end{remark}

\begin{remark}
When \eqref{sssefrep} holds, we have
\begin{equation}
\label{respress}
-\mathcal P(\alpha)=\alpha^{-2}\llll_{,1}(\alpha,\alpha)=(\hd/3)f(1)\alpha^{\hd-3},
\end{equation}
 by Corollary \ref{residualstress} and \eqref{ellforms}.
\end{remark}
\begin{remark}
We shall continue to use the notation \eqref{ellofr}, \eqref{uvofr} below.
\end{remark}

 We now introduce the class to which function $f$  in \eqref{sssefrep} will belong.
 For $a>0$, let us denote 
 \[
 \uu(a)=\{|u-1|\le a\}.
 \]
   Given  $M\ge0$, define
 \begin{multline*}
\cc( M)=\{f\in C^3(\uu(1/8)):
f(1)=1,\;f'(1)=0,\;f''(1)>0,\\
\|f'''\|_\infty\le M f''(1)\}.
\end{multline*}
The important parameter $f''(1)$ will appear frequently, and for convenience
we shall label it as $\beta(f)=f''(1)$.
We shall see in Lemma \ref{moduli}, that this parameter is proportional to
the shear modulus.  
The family  $\{\cc(M)\}_{M\ge0}$ is increasing with respect to $M$.
Note that for every $B>0$, there exists $f_B\in\cc(M)$ 
with $\beta(f_B)=B$, as illustrated by the  functions 
\[
f_B(u)=1+\half B(u-1)^2,
\quad B>0.
\] 
We have   discussed the assumption that $f(1) = W(I)=1$ in Section \ref{notationsec}.
 We have  also seen in Lemma \ref{constlem}
that the condition $f'(1)=0$ is necessary.  Along with $\kk(\hd)$, the positivity
of $\beta(f)$  relates to the coercivity condition for \eqref{ODE2.0}.
The restriction on the third derivative will enable us 
 to  establish estimates for the coefficients in \eqref{biguforms} uniform
 with respect to  $\beta(f)$ for any $f\in\cc(M)$ in the following lemma, and this,
in turn,   will prove essential in establishing existence of solutions.
\begin{lemma}
\label{fbODE}
Fix $\hd$ with $\kk(\hd)>0$, $M\ge0$, and let $f\in\cc(M)$.  
Define
\begin{equation}
\label{deltainterval2}
 \delta\equiv \min\{1/8,1/(8|\hd|),1/(8M)\}.
\end{equation}

Let the functions $U_i(u)$, $i=1,2$, be defined according to \eqref{biguforms}.
Then
  \begin{subequations}
\begin{equation}
\label{veedef}
\begin{aligned}
&V_1(u)=[2U_1(u)-U_2(u)]U_1(u)^{-1}\\
&V_2(u)=\left(\kappa(\hd)+\beta(f)\right)uU_1(u)^{-1}
\end{aligned}
\end{equation}
are well-defined $C^1$ functions on $\uu(\delta)$ such that
\begin{equation}
\label{v2props}
V_1(1)=0,\quad V_2(1)=1, \quad V_2(u)>0,\quad u\in\uu( \delta),
\end{equation}
and
\begin{equation}
\label{vests}
|V_i^{(j)}(u)|<C_0,\quad u\in\uu( \delta),\quad i,j=1,2,
\end{equation}
for some constant $C_0$ depending only on $\hd$ and $M$.
\end{subequations}

  Suppose that $\phi$ satisfies \eqref{phicond} and 
  $u(r)=\lambda_1(r)/\lambda_2(r)=r\phi'(r)/\phi(r)$ satisfies
  \begin{subequations}
  \begin{equation}
\label{deltainterval1}
  u(r)\in\uu(\delta),\quad r\in[0,1].
\end{equation}
Then equation \eqref{ODE2.0} is equivalent to 
\begin{multline}
\label{ODE2}
\phi''(r)+\frac2r  \left(\phi'(r)-\phi(r)/r\right)
=V_1(u(r))\frac1r \left(\phi'(r)-\phi(r)/r\right)\\
+{\mu}\left(\kk(\hd)+\beta(f)\right)^{-1}\;r\;\volume(r)^{-\hd/3+1}V_2(u(r)),
\quad r\in[0,1].
\end{multline}
\end{subequations}

If $|\hd|/\beta(f)<\delta$, then the function $g(u)$ defined in \eqref{sshBC0}
has a unique zero $u_0\in\uu(|\hd|/\beta(f))$ and $\sgn(u_0-1)=-\sgn\hd$.
If $u(1)=u_0$, then the boundary condition \eqref{sshBC0} is satisfied.
\end{lemma}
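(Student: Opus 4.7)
The plan is to establish smoothness, values at $u=1$, and uniform bounds for $V_1,V_2$ first, then deduce the equivalence of the two ODEs by a straightforward division, and finally extract the boundary zero by Taylor expansion. The only delicate smoothness point in $U_1,U_2$ on $\uu(1/8)$ is the term $f'(u)/(u-1)$ inside $U_2$, which I would handle by writing
\[
\frac{f'(u)}{u-1}=\int_0^1 f''(1+t(u-1))\,dt,
\]
a representation that is manifestly $C^2$ on $\uu(1/8)$ for $f\in C^3$ and that equals $f''(1)=\beta(f)$ at $u=1$. Combined with $f(1)=1$ and $f'(1)=0$, direct substitution in \eqref{biguforms} yields $U_1(1)=\kk(\hd)+\beta(f)$ and $U_2(1)=2\kk(\hd)+2\beta(f)$, whence $V_1(1)=0$ and $V_2(1)=1$.

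The main technical step, and the main obstacle I anticipate, is the quantitative control. From $\|f'''\|_\infty\le M\beta(f)$, Taylor's theorem at $u=1$ furnishes
\begin{align*}
\bigl|f(u)-1-\tfrac12\beta(f)(u-1)^2\bigr|&\le \tfrac{M}{6}\beta(f)|u-1|^3,\\
|f'(u)-\beta(f)(u-1)|&\le \tfrac{M}{2}\beta(f)(u-1)^2,\\
|f''(u)-\beta(f)|&\le M\beta(f)|u-1|,
\end{align*}
together with analogous bounds for $f'(u)/(u-1)$ and its derivative through the integral representation above. The definition of $\delta$ in \eqref{deltainterval2} is tuned so that on $\uu(\delta)$ each correction term appearing in $U_i(u)-U_i(1)$ and $U_i'(u)$ is a small universal multiple of $\kk(\hd)+\beta(f)$; in particular $U_1(u)\ge\tfrac12(\kk(\hd)+\beta(f))>0$ on $\uu(\delta)$, so $V_1,V_2$ are well-defined $C^1$ functions and $V_2>0$. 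The key observation is that $|U_i^{(j)}(u)|\le C(\hd,M)(\kk(\hd)+\beta(f))$ uniformly in $f\in\cc(M)$, so that when $V_i^{(j)}$ is expanded as a rational expression in $U_1,U_2$ with powers of $U_1$ in the denominator, all factors of $\kk(\hd)+\beta(f)$ cancel and one obtains a bound $C_0$ depending only on $\hd$ and $M$. This cancellation is exactly the purpose of building the normalization $\|f'''\|_\infty\le M\beta(f)$ into the class $\cc(M)$.

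The equivalence of \eqref{ODE2.0} with \eqref{ODE2} is then purely algebraic: divide \eqref{ODE2.0} by $U_1(u(r))$, apply $U_2/U_1=2-V_1$ to split the second term into $\tfrac{2}{r}(\phi'-\phi/r)$ (which is moved to the left side) minus $V_1\cdot\tfrac{1}{r}(\phi'-\phi/r)$, and use $u/U_1=(\kk(\hd)+\beta(f))^{-1}V_2$ on the right-hand side.

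For the boundary condition, direct substitution gives $g(1)=\hd/3$ and $g'(1)=\beta(f)$. Controlling $g''(u)=(\tfrac\hd3+2)f''(u)+uf'''(u)$ by the Taylor bounds above yields $\|g''\|_\infty\cdot\delta\le\tfrac12\beta(f)$ on $\uu(\delta)$, hence $g'(u)\ge\tfrac12\beta(f)$ there and $g$ is strictly increasing on $\uu(\delta)$. Under the hypothesis $|\hd|/\beta(f)<\delta$, the points $u_\pm=1\pm|\hd|/\beta(f)$ lie in $\uu(\delta)$, and a mean value estimate using $g'(s)\ge\beta(f)/2$ yields $g(u_+)\ge\hd/3+|\hd|/2$ and $g(u_-)\le\hd/3-|\hd|/2$. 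Whatever the sign of $\hd$, one of these values has sign opposite to $g(1)=\hd/3$, so the intermediate value theorem combined with strict monotonicity produces a unique zero $u_0\in\uu(|\hd|/\beta(f))$ satisfying $\sgn(u_0-1)=-\sgn\hd$; then $u(1)=u_0$ gives $g(u(1))=0$, which is \eqref{sshBC0}.
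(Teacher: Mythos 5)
Your proposal is correct and follows essentially the same route as the paper: Taylor expansion of $f$ about $u=1$ exploiting the $\cc(M)$ normalization $\|f'''\|_\infty\le M\beta(f)$, a coercivity lower bound for $U_1$ of the form $c(\kk(\hd)+\beta(f))$, matching upper bounds $|U_i^{(j)}|\le C(\hd,M)(\kk(\hd)+\beta(f))$ so that the $\beta$-dependence cancels in $V_i$, algebraic division for the equivalence of the ODEs, and monotonicity of $g$ plus the intermediate value theorem for the boundary zero. The only cosmetic deviations are your integral representation $f'(u)/(u-1)=\int_0^1 f''(1+t(u-1))\,dt$ (the paper instead introduces $\mathpzc{q}(u)=f'(u)/(u-1)-\beta(f)$ and verifies $C^1$ regularity directly — note with $f\in C^3$ this quotient is $C^1$, not $C^2$ as you claim, though only $C^1$ is needed), and your mean-value estimate on $g$ via a lower bound on $g'$ where the paper applies Taylor's theorem to $g$ itself; both yield the required sign change, and your specific constants ($U_1\ge\tfrac12(\kk+\beta)$, $g'\ge\tfrac12\beta(f)$) are slightly more optimistic than what $\delta$ actually delivers but the argument survives with the correct smaller constants.
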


\begin{proof} 
Fix $\hd$ with $\kk(\hd)>0$ and $M\ge0$.  Let $f\in\cc(M)$,
and recall the notation $\beta(f)=f''(1)$.  
It follows from Taylor's theorem
and the definition of $\cc(M)$ that
\begin{subequations}
\begin{equation}
\label{f0123}
\begin{aligned}
&|f(u)-1-\half \beta(f)(u-1)^2|\le \tfrac16M\beta(f)|u-1|^3,\\
&|f'(u)-\beta(f)(u-1)|\le \half M\beta(f)|u-1|^2,\\
&|f''(u)-\beta(f)|\le M\beta(f)|u-1|,\\
&|f'''(u)|\le M\beta(f),
\end{aligned}
\end{equation}
for  $u\in\uu(1/8)$.  

Define the continuous function
\begin{equation*}
\mathpzc{q}(u)=
\begin{cases}
\displaystyle \frac{f'(u)}{u-1}-\beta(f),&0<|u-1|\le1/8\\
0,&u=1.
\end{cases}
\end{equation*}
Note that
\begin{equation*}
\mathpzc{q}'(u)=
\begin{cases}
\displaystyle \frac{f''(u)-\beta(f)}{u-1}-\frac{f'(u)-\beta(f)(u-1)}{(u-1)^2},
&0<|u-1|\le1/8\\
\half f'''(1),&u=1
\end{cases}
\end{equation*}
is also continuous, and thus, $\mathpzc{q}$ is $C^1$
on the interval $\uu(1/8)$.
Moreover, by 
\eqref{f0123},  we have the inequalities
\begin{equation}
\label{ff12}
\begin{aligned}
&\left|\mathpzc{q}(u)\right|\le \half M\beta(f)|u-1|,\\
&\left|\mathpzc{q}'(u)\right|\le \tfrac32M\beta(f),
\end{aligned}
\end{equation}
on $\uu(1/8)$.

We now restrict our attention to the interval $\uu(\delta)\subset\uu(1/8)$.
From \eqref{f0123}, \eqref{ff12}, and \eqref{deltainterval2}, we obtain
 the estimates
\begin{equation}
\begin{aligned}
\label{fff}
&f(u)\ge1+\half \beta(f)(u-1)^2-\tfrac16M\beta(f)\delta(u-1)^2>1,\\
&f''(u)\ge \beta(f)-M\beta(f)\delta\ge\tfrac78\beta(f),\\
&|f(u)|\le 1+\half \beta(f)(u-1)^2+\tfrac16M\beta(f)|u-1|^3\le 1+\beta(f),\\
&|f'(u)|\le \beta(f)|u-1|+\half M\beta(f)(u-1)^2\le 2\beta(f)\delta,\\
&|f''(u)|\le \beta(f)+M\beta(f)|u-1|
\le \tfrac98\beta(f),\\
&\left|\mathpzc{q}(u)\right|
\le \tfrac1{16}\beta(f),
\end{aligned}
\end{equation}
 on the interval $\uu(\delta)$.
\end{subequations}

From \eqref{fff}, we obtain the coercivity estimate
\begin{subequations}
\begin{equation}
\label{bigulbd}
\begin{aligned}
U_1(u)\ge&\kk(\hd)+u^2f''(u)-\tfrac23|\hd||uf'(u)|\\
\ge &\kk(\hd)+(1-\delta)^2f''(u)-\tfrac23|\hd|(1+\delta)|f'(u)|\\
\ge&\kk(\hd)+(\tfrac78)^3\beta(f)-(\tfrac23)(\tfrac98)(2|\hd|\delta)\beta(f)\\
\ge&\kk(\hd)+(\tfrac78)^3\beta(f)-\tfrac3{16}\beta(f)\\
\ge&\kk(\hd)+\tfrac14\beta(f),
\end{aligned}
\end{equation}
on $\uu(\delta)$.

It follows that the  functions $V_i(u)$, $i=1,2$ in \eqref{veedef}
are well-defined and $C^1$ on $\uu(\delta)$.
Therefore, the ODEs
\eqref{ODE2.0}  and \eqref{deltainterval1}, \eqref{ODE2}
are equivalent.
 By inspection, 
 \eqref{v2props}   holds.

Using \eqref{f0123}, \eqref{ff12}, \eqref{fff}, it is straightforward to verify that the functions
$U_i(u)$, $i=1,2$, defined in \eqref{biguforms} satisfy the bounds
\begin{equation}
\label{bigubds}
|U_i^{(j)}(u)|\le C(1+\beta(f)),\quad u\in\uu(\delta), \quad i,j=1,2.
\end{equation}
 \end{subequations}
 The constant depends on $\hd$ and $M$, but not $\beta(f)$.

With the aid of  \eqref{bigulbd}, \eqref{bigubds}, we can now verify that
the estimates \eqref{vests} also hold.

Finally, we prove to the statement concerning
the function $g$ defined in \eqref{sshBC0}.
Returning to \eqref{fff}, we have
\begin{multline*}
\|g''\|_{L^\infty(\uu(\delta))}
=\left\|\left(\tfrac\hd3+2\right)f''(u)+uf'''(u)\right\|_{L^\infty(\uu(\delta))}\\
\le \tfrac98\left({|\hd|}+2\right)\beta(f)+\tfrac98 M\beta(f)
= \tfrac9{8}\beta(f)\left({|\hd|}+2+M\right).
\end{multline*}
Application of Taylor's theorem yields
\begin{align*}
|g(u)-\hd/3-\beta(f)(u-1)|&=|g(u)-g(1)-g'(1)(u-1)|\\
&\le \half  \|g''\|_{L^\infty(\uu(\delta))} (u-1)^2\\
&\le\tfrac9{16}\beta(f)\left({|\hd|}+2+M\right)\delta|u-1|\\
&\le \tfrac9{16}\beta(f)\left(\tfrac1{8}+\tfrac14+\tfrac18\right)|u-1|\\
&\le\tfrac12\beta(f)|u-1|, 
\end{align*}
on $\uu(\delta)$, and
\begin{align*}
|g'(u)-\beta(f)|&=|g'(u)-g'(1)|\\
&\le\|g''\|_{L^\infty(\uu(\delta))}|u-1|\\
&\le\tfrac9{8}\beta(f)\left({|\hd|}+2+M\right)\delta\\
&<\tfrac34\beta(f),
\end{align*}
on $\uu(\delta)$.

From the first of these inequalities, it follows that if $|\hd|/\beta(f)<\delta$, then
\begin{align*}
&g(u)
>0,\quad|\hd|/\beta(f)<u-1<\delta,\\
&g(u)<0,\quad-\delta<u-1<-|\hd|/\beta(f).
\end{align*}
Thus, $g$ has a zero $u_0\in\uu(|\hd|/\beta(f))$.
It follows from the second inequality, that $g$ is strictly increasing
on $\uu(\delta)$ and since $g(1)=\hd/3$, that $\sgn(u_0-1)=-\sgn\hd$.

\end{proof}

\begin{remark}
\label{homnullLag}
The homogeneous solutions of \eqref{nullLag} are given by 
\begin{equation*}
\llll^0(\lambda)
=c_0\lambda_2^{\hd} \left(\tfrac\hd3\lambda_1/\lambda_2+\left(1-\tfrac\hd3\right)\right)
=c_0 v^{\hd/3}u^{-\hd/3}\left(\tfrac\hd3u+\left(1-\tfrac\hd3\right)\right),
\end{equation*}
for any  $c_0,\hd \in\rr$.
Thus, spherically symmetric null Lagrangians may be homogeneous of any degree
in $F$.
Null Lagrangians are necessarily homogeneous of degree $\hd=1$, $2$,  or $3$ in $F$,
see \cite{Ball-1981}.
For example,  the classical null Lagrangian  $W^0(F)=\det F$
has $\hd=3$ and $\llll^0(\lambda)=\det F(\lambda,\omega)=
\lambda_1\lambda_2^2=v$.
\end{remark}

\section{Existence of  eigenfunctions}
\label{efunctions}

We shall now address the question of existence of solutions to the
problem \eqref{ODE2},  
\eqref{sshBC0}.

\begin{theorem}
\label{th1}
Fix $\hd$ with $\kk(\hd)>0$, $M\ge0$, and
let $f\in \cc(M)$.
There exists a  small constant $R>0$
depending only on $\hd$ and $M$
 such that if
\[
|\mu|/(\kappa(\hd)+\beta(f))\le R,
\]
 then equation \eqref{ODE2} has a solution $\phi^\mu\in C^2([0,1])$ satisfying
 \begin{subequations}
\begin{equation}
\label{IVP2}
\phi^\mu(0)=D^2_r\phi^\mu(0)=0,\quad 
D_r\phi^\mu(0)=1,
\end{equation}
as well as the estimates
\begin{equation}
\label{phibmest1}
|\phi^\mu(r)/r-1|,\;|D_r\phi^\mu(r)-1|,\;|u^\mu(r)-1|
\le Rr^2,
\end{equation}
and
\begin{equation}
\label{phibmest2}
|D_r^2\phi^\mu(r)|\le Rr,
\end{equation}
for $r\in[0,1]$.  
\end{subequations}

The map from $\{\mu: |\mu|/(\kappa(\hd)+\beta(f))\le R\}$ to $ C([0,1])$ given by
\[
\mu
\mapsto \phi^\mu
\]
is continuous.

If $\mu>0$, then 
\begin{subequations}
\begin{equation}
\label{princstr1}
\lambda_1^\mu(r)=D_r\phi^\mu(r)>\lambda_2^\mu(r)=\phi^\mu(r)/r>1,\quad r\in(0,1],
\end{equation}
and if $\mu<0$, then
\begin{equation}
\label{princstr2}
\lambda_1^\mu(r)<\lambda_2^\mu(r)<1,\quad r\in(0,1].
\end{equation}
\end{subequations}

If %$\hd<0$, respectively $\hd>3$, and 
$\beta(f)$ is sufficiently large, then there exists an 
eigenvalue $\mu\ne0$ with $\sgn\mu=-\sgn\hd$
%$\mu>0$, respectively $ \mu<0$,
  such that the
solution $\phi^\mu$  satisfies the boundary condition \eqref{sshBC0}.  
\end{theorem}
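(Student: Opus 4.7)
The plan is to recast \eqref{ODE2} as an integral equation by inverting its linear principal part, then apply a contraction-mapping argument with smallness parameter $|\mu|/(\kk(\hd) + \beta(f))$. The substitution $w(r) = \phi(r)/r$ turns the operator on the left of \eqref{ODE2} into
\[
\phi''(r) + \tfrac{2}{r}\bigl(\phi'(r) - \phi(r)/r\bigr) = r w''(r) + 4 w'(r) = r^{-3}\bigl(r^{4} w'(r)\bigr)',
\]
while \eqref{IVP2} translates to $w(0) = 1$, $w'(0) = 0$. Integration gives
\[
w(r) = 1 + \int_{0}^{r} t^{-4}\!\int_{0}^{t} s^{3} G(s;\phi) \, ds \, dt,
\]
where $G(s;\phi)$ denotes the right-hand side of \eqref{ODE2}. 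The candidate solution is $\phi = rw$, and this procedure defines a nonlinear map $\mathcal T_\mu$ whose fixed points with $u = r\phi'/\phi \in \uu(\delta)$ solve \eqref{IVP2}, \eqref{ODE2}.

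Carry out the fixed point in the Banach space $X$ of $C^{2}$ functions on $[0, 1]$ with $\phi(0) = 0$, $\phi'(0) = 1$, $\phi''(0) = 0$, equipped with the weighted norm
\[
\|\phi\|_{X} = \sup_{r \in (0, 1]} \Bigl(r^{-2}|\phi(r)/r - 1| + r^{-2}|\phi'(r) - 1| + r^{-1}|\phi''(r)|\Bigr).
\]
Using the $\beta(f)$-independent bounds from Lemma \ref{fbODE}, on the ball $\|\phi-\mathrm{id}\|_X \le AR$ the source $G$ splits as a $\mu$-linear piece bounded by $Rs$ (since $V_{2}(1) = 1$ and $|\mu|/(\kk + \beta) \le R$) plus a quadratic remainder bounded by $C s^{3}\|\phi-\mathrm{id}\|_X^{2}$ (since $V_{1}(1) = 0$ makes the $V_1$ contribution vanish to first order). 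Substituting into the integral formula yields
\[
\|\mathcal T_\mu(\phi) - \mathrm{id}\|_{X} \le C_{1} R + C_{1}\|\phi - \mathrm{id}\|_{X}^{2},
\]
together with an analogous Lipschitz estimate, so for $R$ small (depending only on $\hd$ and $M$) $\mathcal T_\mu$ is a strict contraction on $\{\|\phi - \mathrm{id}\|_X \le 2 C_{1} R\}$. Its unique fixed point $\phi^\mu$ satisfies \eqref{phibmest1}, \eqref{phibmest2}; since $\mathcal T_\mu$ depends linearly and continuously on $\mu$ with uniform contraction constant, $\mu\mapsto \phi^\mu$ is Lipschitz into $C([0,1])$.

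For the principal-stretch signs and the boundary condition, isolate the leading linear-in-$\mu$ behavior by inserting $\phi = \mathrm{id}$ into $G$, giving $G(s;\mathrm{id}) = \mu s/(\kk+\beta)$. One pass through the integral formula produces $\phi^\mu(r) - r = \mu r^{3}/[10(\kk+\beta)] + O(R^{2} r^{3})$, whence
\[
\lambda_{2}^\mu(r) - 1 = \tfrac{\mu}{10(\kk+\beta)}\, r^{2} + O(R^{2} r^{2}), \qquad \lambda_{1}^\mu(r) - \lambda_{2}^\mu(r) = \tfrac{\mu}{5(\kk+\beta)}\, r^{2} + O(R^{2} r^{2}).
\]
For $R$ small the leading term dominates the error, yielding \eqref{princstr1}, \eqref{princstr2}, and in particular $u^\mu(1) - 1 = \mu/[5(\kk+\beta)] + O(R^{2})$. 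By Lemma \ref{fbODE}, when $|\hd|/\beta(f) < \delta$ the boundary function in \eqref{sshBC0} has a unique root $u_{0}\in\uu(|\hd|/\beta(f))$ with $\sgn(u_{0} - 1) = -\sgn\hd$. As $\mu$ sweeps $[-R(\kk+\beta), R(\kk+\beta)]$, the continuous function $\mu \mapsto u^\mu(1)$ covers an interval $[1 - c_{0}R, 1 + c_{0}R]$ for a uniform $c_{0} > 0$; if $\beta(f)$ is large enough that $|\hd|/\beta(f) < c_{0} R$, this image contains $u_{0}$, and the intermediate value theorem furnishes $\mu$ with $u^\mu(1) = u_{0}$. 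The required sign $\sgn\mu = \sgn(u^\mu(1) - 1) = -\sgn\hd$ is inherited.

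The main obstacle is quantitative: one must track the constants in the $O(R^{2} r^{2})$ remainder carefully enough to guarantee that the image of $\mu \mapsto u^\mu(1)$ over $|\mu|/(\kk+\beta)\le R$ truly covers an interval of size proportional to $R$ about $1$. This is what allows $R$ (small, fixed from the contraction step and depending only on $\hd, M$) and $\beta(f)$ (large relative to $|\hd|/R$) to be coordinated so that the intermediate value step closes with the predicted sign.
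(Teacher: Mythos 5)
Your contraction-mapping setup is essentially equivalent to the paper's: the paper writes $\phi(r)=r+\int_0^r(r-\rho)\rho\,\zeta(\rho)\,d\rho$ and solves a fixed-point equation for $\zeta\in C([0,1])$, whereas you substitute $w=\phi/r$ and integrate $(r^4w')'=r^3G$ twice; the two ansatzes remove the $r=0$ singularity in the same way, and the quantitative estimates (leaning on Lemma~\ref{fbODE}'s $\beta(f)$-uniform bounds on $V_1,V_2$) run in parallel. The intermediate-value step for the boundary condition also matches the paper's: bound $u^{\mu(\pm R)}(1)-1$ away from $1$ by a fixed multiple of $R$ and take $\beta(f)$ large enough that $|\hd|/\beta(f)$ falls inside that window.

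The genuine gap is in your derivation of \eqref{princstr1}, \eqref{princstr2}. You write $\phi^\mu(r)-r=\tfrac{\mu}{10(\kk+\beta)}r^3+O(R^2r^3)$ and argue that ``for $R$ small the leading term dominates the error.'' But the leading term has magnitude $|\mu| r^3/(10(\kk+\beta))$, which tends to $0$ as $\mu\to0$, while an absolute $O(R^2r^3)$ bound does not. For $\mu$ with $0<|\mu|/(\kk+\beta)\ll R^2$ the claimed domination fails and the sign conclusion does not follow from the estimate as stated. To close the argument perturbatively you must show the remainder is proportional to $|\mu|/(\kk+\beta)$: first deduce $\|\phi^\mu-\mathrm{id}\|_X\le C|\mu|/(\kk+\beta)$ from the fixed-point inequality (not merely $\le 2C_1R$), then observe that $G(s;\phi^\mu)-G(s;\mathrm{id})$ is $O\bigl((|\mu|/(\kk+\beta))^2 s^3\bigr)$ because both the $V_1$ term and the $v^{-\hd/3+1}V_2$ correction vanish at $\phi=\mathrm{id}$. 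Only with that refinement does the leading term dominate uniformly for all nonzero $\mu$ in the allowed range. The paper avoids this bookkeeping entirely by deriving an exact scalar linear ODE for $\mathcal X^\mu(r)=r^3(\lambda_1^\mu(r)-\lambda_2^\mu(r))$, namely $D_r\mathcal X^\mu=\mathcal Y^\mu\mathcal X^\mu+\mu\mathcal Z^\mu$ with $\mathcal Z^\mu>0$ on $(0,1]$, whose variation-of-parameters solution $\mathcal X^\mu(r)=\mu\int_0^r e^{\int_0^\rho\mathcal Y^\mu}\mathcal Z^\mu(\rho)\,d\rho$ gives $\sgn(\lambda_1^\mu-\lambda_2^\mu)=\sgn\mu$ exactly, with no error terms to dominate; it also delivers the monotonicity $\lambda_2^\mu(r)>1$ (resp.\ $<1$) directly from $D_r\lambda_2^\mu=\tfrac1r(\lambda_1^\mu-\lambda_2^\mu)$. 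You should adopt that exact argument or tighten the remainder estimate as above.
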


\begin{remark}
The assumption that $D_r\phi^\mu(0)=1$ in
\eqref{IVP2} does not restrict the possible initial data of the motion
 in \eqref{separability0}.
\end{remark}
\begin{proof}[Proof of Theorem \ref{th1}.]
In order to handle the apparent singularity at $r=0$, it is convenient to make
the ansatz
\begin{equation}
\label{ansatz}
\phi(r)=r+K\zeta(r)\equiv r+\int_0^r(r-\rho)\rho\zeta(\rho)d\rho,\isp{with}\zeta\in C([0,1]).
\end{equation}
Notice that $K$ is a bounded linear operator from $C([0,1])$
into $C^2([0,1])$.   
In fact, it follows from \eqref{ansatz} that
\begin{subequations}
\begin{equation}
\label{zetaformulas}
\begin{aligned}
&\phi'(r)=\lambda_1(r)=1+\int_0^r\rho\zeta(\rho)d\rho\\
&\phi(r)/r=\lambda_2(r)=1+\frac1r\int_0^r(r-\rho)\rho\zeta(\rho)d\rho\\
&\phi'(r)-\phi(r)/r=\lambda_1(r)-\lambda_2(r)
=\frac1r\int_0^r\rho^2\zeta(\rho)d\rho\\
&\phi''(r)=r\zeta(r).
\end{aligned}
\end{equation}
In particular, $\phi(r)=r+K\zeta(r)$ 
satisfies the conditions \eqref{IVP2}.
 
 Assume now that  \eqref{ansatz} holds with
 \[
 \zeta\in N_R=\{\zeta\in C([0,1]):\zinf<R\},\quad  R\le\delta,
 \]
 where $\delta=\min\{1/8,1/(8|\hd|),1/(8M)\}$ was previously defined in \eqref{deltainterval2}.
 
 Straight-forward pointwise  estimates for $r\in[0,1]$
 yield
\begin{equation}
\label{zetaests}
\begin{aligned}
&|\phi(r)-r|\le \tfrac16\zinf r^3\le \tfrac16 Rr^3\\
&|\phi'(r)-1|=|\lambda_1(r)-1|<\half\zinf r^2\le \half Rr^2\\
&|\phi(r)/r-1|=|\lambda_2(r)-1|\le \tfrac16\zinf r^2\le \tfrac16 Rr^2\\
&|\lambda_1(r)-\lambda_2(r)|\le \third\zinf r^2\le \third Rr^2\\
&\lambda_2(r)\ge1-|\lambda_2(r)-1|\ge 1-\tfrac16Rr^2\ge2/3\\
&|u(r)-1|=\left|\lambda_2(r)^{-1}(\lambda_1(r)-\lambda_2(r))\right|\le \tfrac12\zinf r^2\le\half Rr^2\\
&|\volume(r)-1|=\left|\lambda_1(r)\lambda_2(r)^2-1\right|\le  2\zinf r^2\le  2Rr^2.
\end{aligned}
\end{equation}
\end{subequations}
By \eqref{zetaests}, it follows that  \eqref{phibmest1}, \eqref{phibmest2} hold,
and as a consequence \eqref{phicond}, \eqref{deltainterval1} also are
valid.

Since $\zeta\in N_R$ is small, we  regard $\phi(r)=r+K\zeta(r)$ as a perturbation of
the identity map.  Explicitly, 
$\zeta(s)\equiv0$ implies that
\[
\phi(r)=r\isp{and}\lambda_1(r)=\lambda_2(r)=u(r)=\volume(r)=1.
\]
Note that  $\phi(r)=r$ solves \eqref{ODE2} with $\mu=0$.

 Making the substitution \eqref{ansatz} in equation \eqref{ODE2} and using
 \eqref{zetaformulas}, we find that
% \begin{subequations}
\begin{equation}
\label{ODE3}
L\zeta(r)=\ffzbm(r),
\end{equation}
with
\begin{equation*}
\label{isomorphism}
L\zeta(r)= \zeta(r)+\frac2{r^3}\int_0^r\rho^2\zeta(\rho)d\rho
\end{equation*}
%\end{subequations}
 and
 \begin{multline*}
\ffzbm(r)=V_1(u(r))
\frac1{r^3}\int_0^r\rho^2\zeta(\rho)d\rho\\%(\lambda_1(r)-\lambda_2(r))\\
+\mu(\kappa(\hd)+\beta(f))^{-1}\volume(r)^{-\hd/3+1}V_2(u(r)).
\end{multline*}
Recall that the functions $V_i(u)$ depend on $f\in\cc(M)$
and satisfy the conditions  \eqref{v2props}, \eqref{vests}.

The operator $L$ %in \eqref{isomorphism} 
is an isomorphism on $C([0,1])$ with bounded inverse
%\begin{subequations}
\begin{equation*}
%\label{inverse}
L^{-1}\eta(r)=\eta(r)-\frac2{r^5}\int_0^r\rho^4\eta(\rho)d\rho.
\end{equation*}
From \eqref{ODE3}, %\eqref{inverse},
we arrive at the reformulation
\begin{equation}
\label{ODE4}
\zeta(r)=L^{-1}\ffzbm(r).
\end{equation}
%\end{subequations}
In order to solve  \eqref{ODE2}, \eqref{IVP2}, it is sufficient to find a 
solution $\zeta$ of \eqref{ODE4} in $N_R$, with $R\le\delta $.

Let us define
\begin{equation*}
\eps=\mu(\kappa(\hd)+\beta(f))^{-1},
\end{equation*}
 since this expression  appears repeatedly.
 
  The claim is that for $|\eps|\le R\ll1$ the map
\begin{equation*}
\zeta\mapsto L^{-1}\ffzbm
\end{equation*}
is a uniform contraction on $N_R$ taking $N_R$ into itself.

Assume that 
\begin{equation}
\label{paramass}
|\eps|\le R\le \delta .
\end{equation}
As a consequence of \eqref{v2props}, \eqref{vests}, \eqref{zetaests}, %$V_1(1)=0$
 there exists a constant $C_1$, 
independent of $R$ and $\beta(f)$,
 such that 
 \begin{subequations}
 \begin{multline}
\label{cbd}
\|\ff(\zeta_1,\mu)-\ff(\zeta_2,\mu)\|_\infty\\
\le C_1 \left(R+\eps\right)\|\zeta_1-\zeta_2\|_\infty
\le 2C_1 R\|\zeta_1-\zeta_2\|_\infty,
\end{multline}
for all $\zeta_1,\zeta_2\in N_R$.
By \eqref{v2props}, we have %$V_2(1)=1$, and so 
$\ff(0,\mu)(r)=\eps$.
It follows from \eqref{cbd} that 
\begin{equation}
\label{ballbd}
\|\ffzbm-\eps\|_\infty\le 2C_1R^2,
\end{equation}
for all $\zeta\in N_R$.
\end{subequations}

For the contraction estimate,  we have from \eqref{cbd}
\begin{subequations}
\begin{multline}
\label{contr1}
\|L^{-1}\ff(\zeta_1,\mu)-L^{-1}\ff(\zeta_2,\mu)\|_\infty\\
\le \|L^{-1}\|\;2C_1R\;\|\zeta_1-\zeta_2\|_\infty
\le (1/10)\|\zeta_1-\zeta_2\|_\infty,
\end{multline}
 for all  $\zeta_1,\zeta_2\in N_R$, provided $\|L^{-1}\|2C_1R\le 1/10$.

Since $L^{-1}\eps=3\eps/5$
for any constant function $\eps$,
 by \eqref{ballbd}, \eqref{paramass}, there holds
\begin{multline}
\label{contr2}
\|L^{-1}\ffzbm\|_\infty
=\|L^{-1}(\ffzbm-\eps)+3\eps/5\|_\infty\\
\le \|L^{-1}\|2C_1R^2+3\eps/5\le R/10+3R/5 < R,
\end{multline}
\end{subequations}
for all $\zeta\in N_R$, provided $\|L^{-1}\|2C_1R\le1/10$.
This shows that the map leaves $N_R$ invariant.

This establishes the claim
under the restrictions
\begin{equation}
\label{rcond}
R\le\delta\isp{and} 2C_1\|L^{-1}\|R\le1/10.
\end{equation}
We emphasize that these restrictions on $R$ do not depend on the value of $\beta(f)$,
and therefore, the estimates \eqref{contr1}, \eqref{contr2} hold for all $f\in\cc(M)$.

By the uniform contraction principle (see \cite{Chow-Hale-1982} Section 2.2,
or \cite{Sideris-2013} Section 5.3),  
the equation \eqref{ODE4}
has a unique solution $\zeta^\mu\in N_R\subset C([0,1])$, for each 
$\mu=\eps(\kappa(\hd)+\beta(f))$
such that
$|\eps|\le R$.
Moreover, the map 
$\mu\mapsto \zeta^\mu$, from $\{\mu:|\eps|<R\}$ to $C([0,1])$,  is continuous.
In particular, $\zeta^0(r)\equiv0$, by uniqueness.

For each such $\zeta^\mu$, we
obtain a $C^2$ solution 
\[
\phi^\mu(r)=r+K\zeta^\mu(r),\quad \mu=\eps(\kappa(\hd)+\beta(f)),\quad |\eps|\le R,
\]
 of \eqref{ODE2} which
satisfies 
 \eqref{IVP2}, \eqref{phibmest1}, \eqref{phibmest2} and 
 depends continuously on the parameter $\mu$.\footnote{
 The map $(\mu,f)\mapsto\phi^{\mu,f}$, from the open set
 $\{(\mu,f): |\eps|<R,\; f\in\cc(M)\}$
 in $\rr\times\cc(M)$ with the product topology to $C([0,1])$, is continuous.}

Since, by definition \eqref{ellofr},
\begin{equation}
\label{ell2ode}
D_r\lambda_2^\mu(r)=\frac1r\left(\lambda_1^\mu(r)-\lambda_2^\mu(r)\right),
\end{equation}
 it follows from \eqref{ODE2} that
 \begin{multline*}
D_r\left(\lambda_1^\mu(r)-\lambda_2^\mu(r)\right)
+\frac3r\left(\lambda_1^\mu(r)-\lambda_2^\mu(r)\right)
=V_1\left(u^\mu(r)\right)\frac1r 
\left(\lambda_1^\mu(r)-\lambda_2^\mu(r)\right)\\
+{\mu}\left(\kk(\hd)+\beta(f)\right)^{-1}\;r\;
\volume^\mu(r)^{-\hd/3+1}
V_2\left(u^\mu(r)\right).
\end{multline*}
This, in turn, may be expressed in the form
\begin{equation*}
D_r\mathcal X^\mu(r)=\mathcal Y^\mu(r)\mathcal X^\mu(r)+\mu\mathcal Z^\mu(r),
\end{equation*}
where
\begin{align*}
&\mathcal X^\mu(r)=r^3\left(\lambda_1^\mu(r)-\lambda_2^\mu(r)\right)\\
&\mathcal Y^\mu(r)=\frac1rV_1\left(u^\mu(r)\right) \\
&\mathcal Z^\mu(r)=\left(\kk(\hd)+\beta(f)\right)^{-1}\;r^4\;
\volume^\mu(r)^{-\hd/3+1}
V_2\left(u^\mu(r)\right).
\end{align*}
Note that, by \eqref{v2props}, \eqref{zetaests},  
the coefficients $\mathcal Y^\mu$, $\mathcal Z^\mu$ are continuous on $[0,1]$
and $\mathcal Z^\mu$ is strictly positive on $(0,1]$, by \eqref{v2props}.
So we can write
\begin{equation*}
\mathcal X^\mu(r)=\mu \int_0^r\left(\exp\int_0^\rho\mathcal Y^\mu(s)ds\right)\mathcal Z^\mu(\rho)d\rho.
\end{equation*}
We conclude that if $\mu>0$, then $\mathcal X^\mu$ is strictly positive on $(0,1]$,
and then from \eqref{ell2ode}, that $D_r\lambda_2^\mu(r)$
is strictly positive on $(0,1]$.  Thus, from \eqref{IVP2}, we have
\begin{equation*}
\lambda_1^\mu(r)>\lambda_2^\mu(r)>\lambda_2^\mu(0)=D_r\phi^\mu(0)=1,
\quad r\in(0,1].
\end{equation*}
This proves \eqref{princstr1}, and \eqref{princstr2} follows analogously.

For future reference, we also record the fact that 
\begin{equation}
\label{sgnrel}
\sgn(u^\mu(1)-1)=\sgn \mathcal X^\mu(1)=\sgn\mu.
\end{equation}

We shall now show that for all $\hd$ with $\kk(\hd)>0$ and  $\beta(f)$ sufficiently large,
the eigenvalue $\mu$ may be chosen so that 
 the boundary condition \eqref{sshBC0}  
 is fulfilled.

 We continue to assume that $R$ satisfies the conditions \eqref{rcond}.
  Define  
  \[
  \mu(\eps)=\eps(\kappa(\hd)+\beta(f)),
\quad |\eps|\le R,
\]
 and consider the solution family
\[
\{\phi^{\mu(\eps)}(r)=r+K\zeta^{\mu(\eps)}(r):{|\eps|\le R}\}.
\]

The first step will be to establish lower bounds for $|u^{\mu(\pm R)}(1)-1|$.
  Since $\zeta^{\mu(\eps)}$ solves \eqref{ODE4}, %is a fixed point, 
 we have  from \eqref{ballbd}, \eqref{rcond},
\begin{multline*}
\|\zeta^{\mu(\eps)}-3\eps/5\|_\infty
=\|L^{-1}(\ff(\zeta^{\mu(\eps)},\mu(\eps))-\eps)\|_\infty\\
\le \|L^{-1}\|2C_1R^2\le R/10.
\end{multline*}
Letting $\eps=\pm R$, this implies that
\begin{equation}
\label{zlb}
\zeta^{\mu(R)}(r)\ge R/2\isp{and}\zeta^{\mu(-R)}(r)\le -R/2,\isp{for} r\in[0,1].
\end{equation}

From   \eqref{zetaformulas}, \eqref{zetaests}, \eqref{zlb}, we obtain
\begin{equation*}
\begin{aligned}
&\lambda_1^{\mu(R)}(1)-\lambda_2^{\mu(R)}(1)
=\int_0^1\rho^2\zeta^{\mu(R)}(\rho)d\rho\ge (R/2)(1/3)=R/6\\
&\lambda_1^{\mu(-R)}(1)-\lambda_2^{\mu(-R)}(1)
=\int_0^1\rho^2\zeta^{\mu(-R)}(\rho)d\rho\le-R/6\\
&1<\lambda_2^{\mu(R)}(1)=1+\int_0^1(1-\rho)\rho \zeta^{\mu(R)}(\rho)d\rho\le 1+R/6<7/6\\
&1>\lambda_2^{\mu(-R)}(1)=1+\int_0^1(1-\rho)\rho \zeta^{\mu(-R)}(\rho)d\rho\ge 1-R/6>5/6.
\end{aligned}
\end{equation*}

These estimates combine to show that
\begin{equation}
\label{ulb}
\begin{aligned}
&u^{\mu(R)}(1)-1
=\frac{\lambda_1^{\mu(R)}(1)-\lambda_2^{\mu(R)}(1)}
{\lambda_2^{\mu(R)}(1)}
\ge\frac{R/6}{7/6}=R/7\\
&u^{\mu(-R)}(1)-1
=\frac{\lambda_1^{\mu(-R)}(1)-\lambda_2^{\mu(-R)}(1)}
{\lambda_2^{\mu(-R)}(1)}
\le\frac{-R/6}{5/6}<-R/7.
\end{aligned}
\end{equation}

Suppose now that $f\in\cc(M)$ with $\beta(f)$ sufficiently large:
\begin{equation}
\label{betacond}
\beta(f)\ge7|\hd|/R.
\end{equation}
Then
\[
|\hd|/\beta(f)<R\le\delta,
\]
so that by Lemma \ref{fbODE}, $g$ has a unique zero $u_0\in\uu(|\hd|/\beta(f))\subset\uu(R)$.
By continuous dependence upon parameters, the function
\[
z(\eps)=u^{\mu(\eps)}(1)-1
\]
 is continuous for $|\eps|\le R$.
 Using \eqref{ulb}, \eqref{betacond}, we find that
 \[
 z(R)>R/7>|\hd|/\beta(f)>u_0-1>-|\hd|/\beta(f)>-R/7>z(-R).
 \]
 So there exists $|\eps_0|<R$ such that $z(\eps_0)=u_0-1$,
 and thus, 
 \[
 g(u^{\mu(\eps_0)}(1))=g(u_0)=0.
 \]
 By Lemma \ref{fbODE} and \eqref{sgnrel}, we also have that
 \begin{equation*}
-\sgn\hd=\sgn(u_0-1)%=\sgn z(\eps_0)
=\sgn(u^{\mu(\eps_0)}(1)-1)
=\sgn\mu(\eps_0).%=\sgn\eps_0.
\end{equation*}

Therefore, we have shown that if
the eigenvalue is taken to be 
$\mu(\eps_0)=\eps_0(\kk(\hd)+\beta(f))$,
then $\phi^{\mu(\eps_0)}$ satisfies
the boundary condition \eqref{sshBC0} 
  and 
 $\sgn\mu(\eps_0)=-\sgn\hd$.

\end{proof}

\begin{remark}
The boundary condition
$u^{\mu(\eps_0)}(1)=u_0$  is equivalent to
 the linear and homogeneous Robin boundary condition
\[
D_r\phi^{\mu(\eps_0)}(1)=u_0\phi^{\mu(\eps_0)}(1).
\]
\end{remark}

\begin{corollary}
\label{defexist}
Fix $\hd$ with $\kk(\hd)>0$ and $M\ge0$.  
Suppose  that $W$ satisfies
\eqref{sef}, \eqref{objective}, \eqref{isotropic}, \eqref{homogeneity}, and
using Lemma \ref{constlem},
\begin{equation*}
W\circ F(\lambda,\omega)=\volume^{\hd/3}f(u),
\isp{with} f\in\cc(M),
\end{equation*}
for all $\lambda\in\rr_+^2$ and $\omega\in S^2$.
 Let $S$ be  defined by \eqref{Piola}.

If %$\hd<0$, respectively  $\hd>3$, and 
 $\beta(f)$ is sufficiently large,
 then there exists a $C^2$ spherically symmetric orientation-preserving 
deformation $\varphi:\bbbar\to\rr^3$ and an eigenvalue $\mu$ with $\sgn\mu=-\sgn \hd$
%and $\mu>0$, respectively $\mu<0$, 
satisfying \eqref{elliptic1}, \eqref{BC1}.

If $\hd<0$, then $\varphi(\bbbar)\supset\bbbar$
and the principal stretches
satisfy $\lambda_1(r)>\lambda_2(r)>1$, $r\in(0,1]$, while when $\hd>3$,
the inclusion and  inequalities are reversed.
\end{corollary}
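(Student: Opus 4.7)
The proof is essentially an assembly of Theorem \ref{th1}, Lemma \ref{evp3}, and Lemma \ref{ssdgform}; no genuinely new estimates are required. My plan is as follows.

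First, I would invoke Theorem \ref{th1} directly with the given $\hd$, $M$, and $f \in \cc(M)$. The constant $R$ produced there depends only on $\hd$ and $M$, so the smallness conditions \eqref{rcond} are already built in. The quantitative "$\beta(f)$ sufficiently large" hypothesis of the corollary is used exactly to ensure the bound \eqref{betacond} in the proof of Theorem \ref{th1}, which produces an $\eps_0$ with $|\eps_0| \le R$ and a corresponding eigenvalue $\mu = \eps_0(\kk(\hd) + \beta(f))$, together with a $C^2$ solution $\phi^\mu \colon [0,1] \to \rr$ of \eqref{ODE2} satisfying the initial conditions \eqref{IVP2}, the estimates \eqref{phibmest1}--\eqref{phibmest2}, the boundary condition \eqref{sshBC0}, and the sign relation $\sgn \mu = -\sgn\hd$.

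Next, I would translate this radial solution into a deformation. The bound $|u^\mu(r) - 1| \le Rr^2 \le \delta$ from \eqref{phibmest1} places $u^\mu$ inside the interval $\uu(\delta)$ where, by Lemma \ref{fbODE}, equations \eqref{ODE2.0} and \eqref{ODE2} are equivalent. Thus $\phi^\mu$ solves \eqref{ODE2.0} and \eqref{sshBC0}; moreover the conditions \eqref{IVP2} together with the positivity $\lambda_1^\mu, \lambda_2^\mu > 0$ from \eqref{princstr1}--\eqref{princstr2} show that $\phi^\mu$ satisfies the regularity/positivity hypothesis \eqref{phicond}. Lemma \ref{evp3} then yields that
\[
\varphi(y) = \phi^\mu(|y|)\, y/|y|
\]
is a $C^2$ spherically symmetric orientation-preserving deformation of $\bbbar$ solving \eqref{elliptic1} with boundary condition \eqref{BC1} and eigenvalue $\mu$.

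Finally, I would read off the geometric statement. The spatial image $\varphi(\bbbar)$ is the closed ball of radius $\phi^\mu(1) = \lambda_2^\mu(1)$, since $\varphi$ maps each sphere of radius $r$ to the sphere of radius $\phi^\mu(r)$ by Lemma \ref{ssdgform}. When $\hd < 0$ we have $\mu > 0$, so \eqref{princstr1} gives $\lambda_1^\mu(r) > \lambda_2^\mu(r) > 1$ on $(0,1]$; in particular $\phi^\mu(1) > 1$ and $\varphi(\bbbar) \supset \bbbar$. When $\hd > 3$ we have $\mu < 0$, and the reversed inequalities \eqref{princstr2} yield the reversed inclusion $\varphi(\bbbar) \subset \bbbar$. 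There is no real obstacle in this argument; the only subtlety is bookkeeping — verifying that the radial solution produced by Theorem \ref{th1} satisfies the hypotheses of Lemmas \ref{fbODE} and \ref{evp3} so that the deformation $\varphi$ inherits both the eigenvalue equation and the traction-free boundary condition.
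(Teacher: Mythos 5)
Your proposal follows the same route as the paper's proof: invoke Theorem~\ref{th1} to produce the eigenvalue~$\mu$ with $\sgn\mu=-\sgn\hd$ and the radial eigenfunction $\phi^\mu$ satisfying \eqref{ODE2}, \eqref{IVP2}, \eqref{sshBC0}; pass through Lemma~\ref{fbODE} to recover \eqref{ODE2.0}; then apply Lemma~\ref{evp3} to obtain the deformation $\varphi$ solving \eqref{elliptic1}, \eqref{BC1}; and finally read off the inclusion and the stretch inequalities from \eqref{princstr1}--\eqref{princstr2}. The extra detail you give (checking $u^\mu(r)\in\uu(\delta)$ via \eqref{phibmest1} and verifying \eqref{phicond}) is the same bookkeeping the paper compresses into a single sentence, so the proof is correct and essentially identical to the paper's.
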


\begin{proof}
If $\beta(f)$ is sufficiently large,  Theorem \ref{th1} 
ensures the existence of an eigenvalue  $\mu\in\rr$, with $\sgn\mu=-\sgn\hd$,
and 
an eigenfunction function $\phi=\phi^\mu\in C^2([0,1])$ satisfying
the hypotheses of Lemma \ref{fbODE}, the differential 
equation \eqref{ODE2}, and the boundary condition
\eqref{sshBC0}.  Therefore, $\phi$ also solves the differential equation \eqref{ODE2.0}.

By assumption, $f(u)$ is the restriction of an appropriate strain energy function to
the spherically symmetric deformation gradients.  Therefore, Lemma \ref{evp3}
yields the desired deformation $\varphi(y)=\phi(r)\omega$.

Since $\varphi(\bbbar)$ is a sphere of radius $\phi(1)=\lambda_2(1)$,
the final statements follow from \eqref{princstr1}, \eqref{princstr2}.
\end{proof}

\begin{remark}
Since we have assumed that the reference density is $\br=1$, the density of the
deformed configuration in material coordinates is 
$\varrho\circ\varphi(y)=v(r)^{-1}=\left(\lambda_1(r)\lambda_2(r)^2\right)^{-1}$.
If $\hd<0$, we have $\varrho\circ\varphi(y)<1$, for $r\in(0,1]$, and if $\hd>3$, we have
$\varrho\circ\varphi(y)>1$, for $r\in(0,1]$.
\end{remark}

\section{Existence of expanding and collapsing bodies}
\label{mainresult}
Putting together the results of Lemmas \ref{separable} and \ref{dyn} with Corollary \ref{defexist}
yields our main result.

\begin{theorem}
\label{mainthm} 
Under the assumptions of Corollary \ref{defexist},
let $\varphi:\bbbar\to\rr^3$ be the resulting  $C^2$ spherically symmetric 
orientation-preserving deformation solving 
\eqref{elliptic1}, \eqref{BC1} with corresponding eigenvalue $\mu$,
such that $\sgn\mu=-\sgn\hd$.

Given the eigenvalue $\mu$, let $\tdsf:[0,\tau)\to\rr_+$ be the 
$C^2$ solution of the initial value problem
for \eqref{ODE1} with initial data $(\tdsf(0),\dot \tdsf(0))\in\rr_+\times\rr$,
from Lemma \ref{dyn}.

Then by Lemma \ref{separable}, $x(t,y)=\tdsf(t)\varphi(y)$ is a  motion 
in $C^2([0,\tau)\times\bbbar)$ satisfying \eqref{elasticityPDE}, \eqref{elasticityBC}.
Under this motion, the spatial configuration  of the elastic body at time $t$ is a sphere $\omt$
of radius $\tdsf(t)\phi(1)$.  

If $\hd<0$, then the lifespan of the solution satisfies
$\tau=+\infty$ and $0<(2E(0))^{1/2}-\tdsf(t)/t\to0$, as $t\to\infty$,
where $E(0)=\half\dot\tdsf(0)^2-\tfrac{\mu}{\hd}a(0)^{\hd}$.

If $\hd>3$, then $\tau<\infty$ and $\tdsf(t)\to0$, as $t\to\tau$.

\end{theorem}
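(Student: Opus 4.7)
The theorem is essentially a bookkeeping combination of three previous results, so the plan is to assemble them in order, verify that the sign conditions match, and then read off the asymptotic behavior from Lemma \ref{dyn}.

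First, I would invoke Corollary \ref{defexist}. Its hypotheses coincide with those of the theorem, so for $\beta(f)$ sufficiently large it produces the $C^2$ spherically symmetric orientation-preserving deformation $\varphi:\bbbar\to\rr^3$ together with an eigenvalue $\mu\in\rr$ satisfying $\sgn\mu=-\sgn\hd$ and solving the nonlinear eigenvalue problem \eqref{elliptic1}, \eqref{BC1}. This step requires no further work.

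Next I would feed the eigenvalue $\mu$ into Lemma \ref{dyn}. The sign constraint $\sgn\mu=-\sgn\hd$ is exactly what puts the parameters in the admissible ranges of that lemma: if $\hd<0$ then $\mu>0$, and if $\hd>3$ then in particular $\hd>1$ and $\mu<0$. In either case, for any chosen initial data $(\tdsf(0),\dot\tdsf(0))\in\rr_+\times\rr$, Lemma \ref{dyn} yields a positive $C^2$ solution $\tdsf$ of \eqref{ODE1} on its maximal interval $[0,\tau)$, together with the precise lifespan information: $\tau=+\infty$ with $0<(2E(0))^{1/2}-\tdsf(t)/t\to0$ as $t\to\infty$ in the expanding case, and $\tau<\infty$ with $\tdsf(t)\to 0$ as $t\to\tau$ in the collapsing case. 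The constant $E(0)=\tfrac12\dot\tdsf(0)^2-\tfrac{\mu}{\hd}\tdsf(0)^{\hd}$ is the conserved energy from \eqref{scalarenergy}.

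With $\varphi$ and $\tdsf$ in hand, the separable ansatz $x(t,y)=\tdsf(t)\varphi(y)$ falls directly under Lemma \ref{separable}, whose hypotheses \eqref{ODE1}, \eqref{elliptic1}, \eqref{BC1} are now all verified. That lemma guarantees $x\in C^2([0,\tau)\times\bbbar)$ and that $x$ solves \eqref{elasticityPDE}, \eqref{elasticityBC} with $\br=1$. Finally, since $\varphi(y)=\phi(r)\omega$ is spherically symmetric, the spatial domain at time $t$ is $\omt=\tdsf(t)\varphi(\bbbar)$, a ball of radius $\tdsf(t)\phi(1)=\tdsf(t)\lambda_2(1)$, so the dichotomy $\tdsf(t)\to\infty$ versus $\tdsf(t)\to0$ translates directly into expansion to infinite diameter or collapse to a point.

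There is no real obstacle here: each ingredient is already proved, the sign matching $\sgn\mu=-\sgn\hd$ is supplied precisely by Corollary \ref{defexist}, and the compatibility of the regularity classes is automatic ($\varphi\in C^2(\bbbar)$, $\tdsf\in C^2([0,\tau))$, hence $x\in C^2$). The only mild care needed is noting that when $\hd>3$ the case $\hd>1$ of Lemma \ref{dyn} applies, and writing $E(0)$ in the form stated in the theorem.
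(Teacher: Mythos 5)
Your proposal is correct and is exactly the paper's argument: the paper itself presents Theorem \ref{mainthm} with no separate proof, stating only that it follows by combining Lemma \ref{separable}, Lemma \ref{dyn}, and Corollary \ref{defexist}, which is precisely the assembly you carry out. The sign-matching observation (that $\sgn\mu=-\sgn\hd$ from Corollary \ref{defexist} lands in the admissible parameter range of Lemma \ref{dyn}, using $\hd>3\Rightarrow\hd>1$) is the only point requiring any care, and you handle it correctly.
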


\begin{remark}
By  \eqref{respress}, the  sign of the residual pressure 
$\mathcal P(1)=-\hd/3$ determines
whether the body expands or collapses.
\end{remark}

\section{Constitutive theory}
\label{constsec}

The next result is a partial converse to Lemma \ref{constlem}.

\begin{theorem}
\label{thm2}
If $f:\rr_+\to\rr_+$ 
is smooth, with  $f'(1)=0$, then there exists 
a $C^2$  strain energy function $W$
satisfying \eqref{sef}, \eqref{objective}, \eqref{isotropic}, \eqref{homogeneity},
  such that \eqref{sssefrep} holds.
  \end{theorem}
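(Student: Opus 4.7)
\emph{Reduction.} By the structure theorem around \eqref{oihsef}, every $W$ satisfying \eqref{sef}--\eqref{homogeneity} has the form $W(F)=(\det F)^{\hd/3}\Phi(H_1(\Sigma(F)),H_2(\Sigma(F)))$ for some $\Phi:\rr_+^2\to[0,\infty)$. Since $\Sigma(F(\lambda,\omega))$ has eigenvalues $u^{2/3},u^{-1/3},u^{-1/3}$, the representation \eqref{sssefrep} becomes the interpolation requirement
\[
\Phi(h_1(u),h_2(u))=f(u),\quad u>0,
\]
with $h_1(u)=\tfrac13(u^{2/3}+2u^{-1/3})$ and $h_2(u)=\tfrac13(u^{-2/3}+2u^{1/3})$. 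The task thus reduces to constructing a smooth nonnegative $\Phi$ with this interpolation property; the conditions \eqref{sef}--\eqref{homogeneity} on $W$ then follow automatically from the smoothness of $F\mapsto(\det F,H_1(\Sigma(F)),H_2(\Sigma(F)))$.

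\emph{Geometry of $\gamma(u):=(h_1(u),h_2(u))$.} Changing variables to $\xi=H_1+H_2-2$ and $\eta=H_1-H_2$, direct Taylor expansion at $u=1$ gives
\[
\xi(u)=\tfrac29(u-1)^2+O((u-1)^3),\qquad \eta(u)=\tfrac{2}{81}(u-1)^3+O((u-1)^4),
\]
while the closed-form factorization $\eta(u)=\tfrac13(s-s^{-1})^3(s+s^{-1})$ with $s=u^{1/6}$ exhibits $\eta$ as a strictly monotone function of $u$. Hence $\gamma$ is globally injective, is a smooth embedding on each of the intervals $(0,1)$ and $(1,\infty)$, and has a single cusp of type $(\xi,\eta)\sim(t^2,t^3)$ at $(1,1)$.

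\emph{Construction of $\Phi$ and conclusion.} I build $\Phi$ by gluing local constructions. Away from $(1,1)$ the two tails of $\gamma$ are properly embedded, so a tubular-neighborhood retraction yields smooth extensions of $f\circ\gamma^{-1}$ on open sets $V_\pm$ covering them. Near the cusp I proceed by formal Taylor matching: assigning the weights $\deg\xi=2,\deg\eta=3$, the restriction of $\xi^i\eta^j$ to $\gamma$ has leading order $(u-1)^{2i+3j}$, and every integer $n\ge0$ except $n=1$ is of the form $2i+3j$ with $i,j\ge 0$. The single missing order $n=1$ would correspond to the coefficient $f'(1)$, which vanishes by hypothesis; the remaining Taylor coefficients of $f$ at $u=1$ can therefore be matched iteratively by choosing weighted-degree-$n$ polynomials in $(\xi,\eta)$. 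Borel's theorem realizes the resulting formal series as a smooth function $\Phi_0$ on a neighborhood $V_0$ of $(1,1)$, whose remainder $\rho(u):=f(u)-\Phi_0(\gamma(u))$ is smooth on $\rr_+$ and flat at $u=1$. The flatness of $\rho$ allows it to be extended off the curve to a smooth function $\chi$ on $V_0$ vanishing to infinite order at $(1,1)$, via the tubular retractions on each of the two tails inside $V_0$; replacing $\Phi_0$ by $\Phi_0+\chi$ produces a smooth function on $V_0$ equal to $f$ along the whole of $\gamma\cap V_0$. A partition of unity subordinate to $\{V_0,V_+,V_-,\rr_+^2\setminus\gamma\}$ assembles these into a smooth global $\Phi:\rr_+^2\to\rr$ satisfying $\Phi\circ\gamma=f$, and nonnegativity is enforced by confining the gluing to a narrow tube about $\gamma$ and extending by a positive constant outside. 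Setting $W(F)=(\det F)^{\hd/3}\Phi(H_1(\Sigma(F)),H_2(\Sigma(F)))$ gives the required strain energy. The main obstacle is the cusp at $(1,1)$, and the hypothesis $f'(1)=0$ is precisely the solvability condition that allows the formal Taylor matching to proceed on this singular curve.
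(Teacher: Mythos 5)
Your proof is correct in outline but takes a genuinely different route from the paper, and it is worth comparing the two. Both arguments make the same reduction: find a $C^2$ function $\Phi$ on $\rr_+^2$ with $\Phi(h_1(u),h_2(u))=f(u)$, and both isolate the cusp of the curve $\mathcal H$ at $u=1$ as the obstacle. Both also identify the hypothesis $f'(1)=0$ with the fact that $1$ is the unique gap in the numerical semigroup $\langle 2,3\rangle$: in the paper this appears as the observation that $g(1)=g'(1)=0$ holds automatically while $g^{(2)},\dots,g^{(5)}$ are killed by choosing $g_1,\dots,g_4$, and in your argument it appears as the statement that every order $n\ne1$ is of the form $2i+3j$. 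From there the methods diverge. The paper stops the Taylor matching at order $5$, factors the remainder as $g(u)=\ell_2(u)^2\widehat G(u)$, and then uses that $\ell_2:\rr_+\to\rr$ is a global homeomorphism to substitute $\ell_2=x_2$ and set $\widetilde G(w)=w^2\widehat G(\ell_2^{-1}(w))$; the $w^2$ prefactor is exactly what compensates for the H\"older-$1/3$ singularity of $\ell_2^{-1}$, giving $C^2$ and nothing more. Your argument instead matches the Taylor series to all orders, invokes Borel's theorem to realize $\Phi_0$, and then extends the flat remainder off the curve; if carried out it would yield $C^\infty$ regularity, which is strictly stronger than what the theorem asks for (and what the paper's method can reach without Borel). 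The price is that your argument is less self-contained and one step is imprecise: a tubular neighborhood of either open tail $\{t>0\}$ or $\{t<0\}$ of the cusp $\{(t^2,t^3)\}$ does not contain a full neighborhood of the cusp point, so "tubular retractions on each of the two tails" does not by itself produce $\chi$ on all of $V_0$. The gap is fixable precisely because $\rho$ is flat at the cusp; for instance, in cusp-normal coordinates $(p,q)$ with the curve $q^2=p^3$, setting $\chi(p,q)=\rho(\sgn(q)\,|q|^{1/3})$ gives a function, constant in $p$, that is smooth because every derivative of $\rho$ vanishes faster than any power, and one then cuts off away from the curve and patches with the tail extensions. With that repair, your formal-series-plus-Borel-plus-Whitney construction is a valid alternative to the paper's explicit substitution, trading concreteness for regularity.
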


\begin{remark}
Note that  $f$ is required to be strictly positive and sufficiently differentiable.
\end{remark}

\begin{proof}[Proof of Theorem \ref{thm2}.]
The construction is motivated by
\eqref{oihsef}, \eqref{oihsef1}.

Recall from Lemma \ref{flomdef} that $F(\lambda,\omega)=A(F(\lambda,\omega))$ is positive definite and symmetric
with eigenvalues $\lambda_1,\lambda_2,\lambda_2$.
So from \eqref{invsst}, \eqref{lamuinv}, we have
\begin{subequations}
\begin{equation}
\label{ssinvariants1}
\det(F(\lambda,\omega))=\lambda_1\lambda_2^2=\volume,\\
\end{equation}
\begin{equation}
\label{ssinvariants2}
\begin{aligned}
H_1(\Sigma(F(\lambda,\omega)))&= \tfrac13\tr\Sigma(F(\lambda,\omega))\\
&=\third \det(F(\lambda,\omega))^{-1/3}\tr F(\lambda,\omega)\\
&=\third (\lambda_1\lambda_2^2)^{-1/3}(\lambda_1+2\lambda_2)\\
&=\third(u^{2/3}+2u^{-1/3})\\
&\equiv h_1(u),
\end{aligned}
\end{equation}
and since $\Sigma(F(\lambda,\omega))\in\slt$, 
\begin{equation}
\label{ssinvariants3}
\begin{aligned}
H_2(\Sigma(F(\lambda,\omega)))
&=\third\tr\cof\Sigma(F(\lambda,\omega))\\
&=\third\tr\Sigma(F(\lambda,\omega))^{-1}\\
&=\third \det(F(\lambda,\omega))^{1/3}\tr F(\lambda,\omega)^{-1}\\
&=\third(\lambda_1\lambda_2^2)^{1/3}(\lambda_1^{-1}+2\lambda_2^{-1})\\
&
=\third(u^{-2/3}+2u^{1/3})\\
&\equiv h_2(u).
\end{aligned}
\end{equation}
\end{subequations}

It is enough to find a $C^2$ function
$\Phi:\rr_+^2\to\rr_+$ such that 
\begin{equation}
\label{claim}
\Phi(h_1(u),h_2(u))=f(u), \quad u\in\rr_+,
\end{equation}
for then, by  \eqref{ssinvariants1}, \eqref{ssinvariants2}, \eqref{ssinvariants3}, % \eqref{},
the function
\begin{equation*}
W(F)=(\det F)^{\hd/3}\Phi(H_1(\Sigma(F)),H_2(\Sigma(F)))
\end{equation*}
automatically satisfies the requirements \eqref{sef},
\eqref{objective}, \eqref{isotropic}, \eqref{homogeneity}, \eqref{sssefrep}.
The construction of $\Phi$  is not entirely routine because the curve 
\begin{equation}
\label{bdry}
\mathcal H=\{\left(h_1(u),h_2(u)\right)):u\in\rr_+\}
\end{equation}
 has a cusp at $u=1$,
as shown in Figure \ref{region}.

Equivalently, \eqref{claim} will be proven if we can find a $C^2$ function $\WT :\rr_+^2\to\rr_+$
such that
\[
\WT(\ell_1(u),\ell_2(u))=f(u),\quad u\in\rr_+,
\]
where
\[
\ell_1(u)=h_1(u)-1,
\quad \ell_2(u)=h_1(u)-h_2(u),
\]
because we can then simply take
\[
\Phi(x_1,x_2)=\WT\left(x_1-1, x_1-x_2\right),
\quad x\in\rr^2.
\]

Observe that  $\ell_1,\ell_2\in C^\infty(\rr_+)$ and
\begin{subequations}
\begin{equation}
\label{ellderiv}
\begin{aligned}
&\ell_1(1)=\ell_1'(1)=0,& \ell_1''(1)>0,\\
& \ell_2(1)=\ell_2'(1)=\ell_2''(1)=0,
&\ell_2'''(1)>0,
\end{aligned}
\end{equation}
so that
\begin{align}
\label{g2ests}
&|\ell_2^{(k)}(u)|\sim |u-1|^{3-k},\quad k=0,1,2,\quad |u-1|\ll1.
\end{align}

The function $\ell_2:\rr_+\to\rr$ is a homeomorphism, and
it can be written as
\begin{equation}
\label{factorization}
\ell_2(u)=\hat\ell_2(u)(u-1)^3,
\end{equation}
\end{subequations}
where $\hat\ell_2$ is a smooth positive function.

Let $\xi=\ell_2^{-1}$.  Then by \eqref{g2ests},
\[
|w|=|\ell_2\circ \xi(w)|\sim |\xi(w)-1|^3,\quad |w|\ll1,
\]
and so
\begin{subequations}
\begin{equation}
\label{etaests1}|\xi(w)-1|\sim|w|^{1/3},\quad |w|\ll1.
\end{equation}
Now $\xi \in C^2(\rr\setminus\{0\})$, and for $w\ne0$, we have
\[
\xi '(w)=1/\ell_2'\circ \xi (w),\quad \xi ''(w)=-\ell_2''\circ \xi (w)/\left(\ell_2'\circ \xi (w)\right)^3.
\]
Thus, for $0<|w|\ll1$, we obtain from \eqref{g2ests}, \eqref{etaests1},
\begin{equation}
\label{etaests2}
\begin{aligned}
&|\xi'(w)|\sim |\xi (w)-1|^{-2}\sim|w|^{-2/3},\\
&|\xi ''(w)|\sim |\xi (w)-1|^{-5}\sim|w|^{-5/3}.
\end{aligned}
\end{equation}
\end{subequations}

For $u\in\rr_+$, define the smooth function
\[
g(u)=f(u)-f(1)-g_1 \ell_1(u)-g_2 \ell_2(u)-g_3\ell_1(u)^2-g_4\ell_1(u)\ell_2(u),
\]
with $\{g_k\}_{k=1}^4$ to be determined.
 Using the hypothesis $f'(1)=0$ and 
 \eqref{ellderiv}, we derive
 \begin{align*}
 g(1)=&0\\
 g'(1)=&0\\
 g^{(2)}(1)=&f^{(2)}(1)-g_1\ell_1^{(2)}(1)\\
g^{(3)}(1)=&f^{(3)}(1)-g_1\ell_1^{(3)}(1)-g_2\ell_2^{(3)}(1)\\
g^{(4)}(1)=&f^{(4)}(1)-g_1\ell_1^{(4)}(1)-g_2\ell_2^{(4)}(1)-6g_3\ell_1^{(2)}(1)^2\\
g^{(5)}(1)=&f^{(5)}(1)-g_1\ell_1^{(5)}(1)-g_2\ell_2^{(5)}(1)-20g_3\ell_1^{(2)}(1)\ell_1^{(3)}(1)\\
&\phantom{f^{(5)}(1)-g_1\ell_1^{(5)}(1)-g_2\ell_2^{(5)}(1)}-10g_4 \ell_1^{(2)}(1)\ell_2^{(3)}(1).
\end{align*}
The system $g^{(k)}(1)=0$, $k=2,\ldots,5$
is diagonal, and since $\ell_1^{(2)}(1),\ell_2^{(3)}(1)\ne0$,  
 there exist unique values $\{g_k\}_{k=1}^4$
 such that 
 \begin{equation*}
\label{geecoeffs}
g^{(k)}(1)=0,\quad k=0,\ldots,5.
\end{equation*}
 Now $g$ is smooth, so  there exists $G\in C^2(\rr_+)$ such that
\[
g(u)=(u-1)^6G(u).
\]

By \eqref{factorization},
we may write
\begin{equation*}
g(u)
=\ell_2(u)^2\hat\ell_2(u)^{-2} G(u)
\equiv\ell_2(u)^2\widehat G(u),
\end{equation*}
with $\widehat G=(\hat\ell_2)^{-2}\cdot G\in C^2(\rr_+)$.  This, in turn, may be expressed as
\[
g(u)=\widetilde G\circ \ell_2(u),\isp{with}
\widetilde G(w)=w^2 \widehat G\circ \xi(w),\quad w\in\rr.
\]
Although $\xi$ is not differentiable at $w=0$,
it follows from \eqref{etaests1}, \eqref{etaests2} that $\widetilde G\in C^2(\rr)$.

For $x\in\rr^2$, define
\[
\WT(x)=f(1)+g_1x_1+g_2x_2+g_3 x_1^2+g_4x_1x_2+\widetilde G(x_2).
\]
Then $\WT$ is in $ C^2$ and $\WT(\ell_1(u),\ell_2(u))=f(u)$,
as desired.

Since $f>0$, by assumption,
the resulting function $\Phi$  is positive
in a neighborhood of the curve $\mathcal H$, and it can modified away from $\mathcal H$,
if necessary, to ensure positivity on its entire domain without changing its 
values along the curve $\mathcal H$.
\end{proof}

\subsection*{Example}

Going back to the example  \eqref{example} given in the Introduction,
we have using \eqref{invsst} that
$
W(\Sigma(F))=\Phi(H_1(\Sigma(F)),H_2(\Sigma(F))),
$
with
\begin{align*}
\Phi(x_1,x_2)&=1+c_1(x_1-1)+c_2(x_2-1)\\
&=1+(c_1+c_2)(x_1-1)-c_2(x_1-x_2),\quad c_1,\;c_2>0.
\end{align*}
  Thus, using \eqref{sefrestrss} and  the notation of the previous proof, %Theorem \ref{thm2},
we find
\begin{align*}
f(u)&=\Phi(H_1(\Sigma(F)),H_2(\Sigma(F)))\Big|_{F=F(\lambda,\omega)}\\
&=1+(c_1+c_2)(h_1(u)-1)-c_2(h_1(u)-h_2(u))\\
&=1+(c_1+c_2)\ell_1(u)-c_2 \ell_2(u).
\end{align*}
By \eqref{ellderiv}, we have $\beta(f)=(c_1+c_2)\ell_1''(1)>0$,
and 
$f\in\cc(M)$, for all $c_1$, $c_2>0$,
with $M=\ell_1''(1)^{-1}(\|\ell_1'''\|_\infty+\|\ell_2'''\|_\infty)$.
It follows that  example \eqref{example} satisfies the hypotheses of Theorem \ref{mainthm}
for all $c_1, c_2>0$, with $c_1+c_2$ sufficiently large.

For  positive definite and symmetric matrices
$\Sigma\in\slt$, we find by the consideration of
eigenvalues that
\[
H_1(\Sigma)-1\sim |\Sigma-I|^2\isp{and}
|H_1(\Sigma)-H_2(\Sigma)|\le C|\Sigma-I|^3,
\]
in a neighborhood of $\Sigma=1$.  Thus, $W(\Sigma(F))-1$ has a relative minimum
when $\Sigma(F)=I$, i.e.\ when $F=\sigma  U$ for some $\sigma>0$ and $U\in\sot$.

More generally, we could take
\[
\Phi(x_1,x_2)=1+(c_1+c_2)(x_1-1)-c_2(x_1-x_2)G(x_1,x_2), \quad c_1, c_2>0.
\]
If   $G:\rr_+^2\to\rr$ is any smooth function with $\|G(\ell_1,\ell_2)\|_{C^3}$  
uniformly bounded
%in  $\uu(1/8)$
 independent
of $c_1,c_2$, then there exists an $M>0$,
such that $f(u)\in\cc(M)$, for all $c_1,c_2>0$.

\begin{remark}
The range of the map $F\mapsto \left(H_1(\Sigma(F)),H_2(\Sigma(F))\right)$
from the domain $\glt$ into $\rr_+^2$ is given by
\[
\mathcal R=\{(x_1,x_2)\in\rr_+^2: 
\Delta(x)\equiv 3x_1^2x_2^2-4x_1^3-4x_2^3+6x_1x_2-1\ge0\}.
\]
The boundary of this region corresponds to positive definite symmetric matrices 
$\Sigma(F)=(\det A(F))^{-1/3}A(F)$ with a repeated eigenvalue,
and it is given by the curve $\mathcal H$ defined in \eqref{bdry}.
See Figure \ref{region}.
This is because if $A(F)$ has eigenvalues $\{\lambda_i\}_{i=1}^3$ and
\[
(x_1,x_2)=\left(H_1(\Sigma(F)),H_2(\Sigma(F)\right),
\]
  then
   \begin{equation*}
\Delta(x)=\tfrac1{27}(\lambda_1-\lambda_2)^2(\lambda_2-\lambda_3)^2(\lambda_3-\lambda_1)^2/(\lambda_1\lambda_2\lambda_3)^2.
\end{equation*}
This expression is nonnegative and vanishes if and only if $A(F)$ has a repeated eigenvalue.\footnote
{The expression $\Delta(x)$ is the discriminant of the characteristic polynomial of the
shear strain tensor.}
By the spectral theorem, a matrix $A$ is positive definite, symmetric, with a repeated eigenvalue
if and only if $A=F(\lambda,\omega)$, for some $\lambda\in\rr^2_+$, $\omega\in S^2$,
so  $\partial\mathcal R=\mathcal H$, by \eqref{ssinvariants2}, \eqref{ssinvariants3}.

\end{remark}
\begin{figure}[h!]
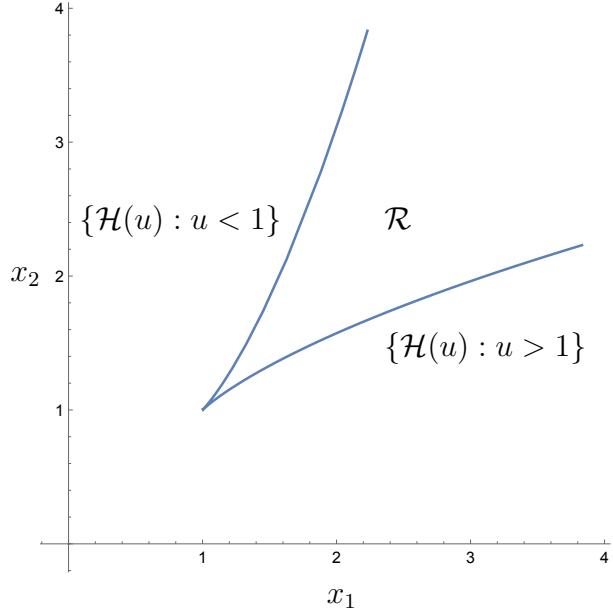

\caption{The region $\mathcal R$ and its boundary $\mathcal H$.}
 \label{region}
\ \\
\setlength\unitlength{1mm}
\begin{center}

\begin{overpic}[scale=.4]%,grid,tics=5]
{region.pdf}
\put(60,38){$\{\mathcal H(u):u>1\}$}
\put(7,60){$\{\mathcal H(u):u<1\}$}
\put(60,60){$\mathcal R$}
\put(50,-5){$x_1$}
\put(-5,51){$x_2$}
\end{overpic}
\end{center}
\end{figure}

\begin{proposition}
\label{BEineq}
Any $C^2$ strain energy function $W$ satisfying 
\eqref{sef}, \eqref{objective},
\eqref{isotropic}, \eqref{homogeneity}
is consistent with the Baker-Ericksen condition \cite{Baker-Ericksen-1954}
along $\mathcal H$, provided \eqref{sssefrep} holds with
\[
f'(1)=0 \isp{and} f''(u)\ge0,\quad u\in\rr_+.
\]
\end{proposition}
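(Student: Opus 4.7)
The plan is to restrict the Baker--Ericksen inequality to the spherically symmetric deformation gradients $F(\lambda,\omega)$ and reduce it to a scalar inequality on $f$ that follows from convexity and the vanishing of $f'(1)$. Recall that the Baker--Ericksen condition for an isotropic material states that any two principal stretches $\lambda_i,\lambda_j$ of $A(F)$ and their corresponding principal Cauchy stresses $\sigma_i,\sigma_j$ satisfy $(\sigma_i-\sigma_j)(\lambda_i-\lambda_j)\ge 0$.

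By Lemma \ref{flomdef}, $F(\lambda,\omega)$ is already positive definite symmetric with principal stretches $\lambda_1,\lambda_2,\lambda_2$ and eigenspaces $\spn\{\omega\},\spn\{\omega\}^\perp$. Lemma \ref{P-K-str} then identifies the principal Cauchy stresses as
\begin{equation*}
\sigma_1=\lambda_2^{-2}\llll_{,1}(\lambda),\qquad
\sigma_2=\tfrac12(\lambda_1\lambda_2)^{-1}\llll_{,2}(\lambda)\ \text{(double)}.
\end{equation*}
The inequality with the repeated pair $(\sigma_2,\lambda_2)$ is automatically trivial, so only the pair involving $\sigma_1$ and $\sigma_2$ has content.

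Using \eqref{lamuinv} to write $\lambda_1-\lambda_2=v^{1/3}u^{-1/3}(u-1)$, and substituting the formulas \eqref{ellforms} for $\llll_{,1}$ and $\llll_{,2}$, I expect a direct calculation to yield
\begin{equation*}
\sigma_1-\sigma_2 \;=\; \tfrac{3}{2}\,v^{(\hd-3)/3}\,u\,f'(u),
\end{equation*}
so that
\begin{equation*}
(\sigma_1-\sigma_2)(\lambda_1-\lambda_2)
=\tfrac{3}{2}\,v^{(\hd-2)/3}\,u^{2/3}\,f'(u)(u-1),
\end{equation*}
which has the same sign as $f'(u)(u-1)$.

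Finally, the hypotheses $f'(1)=0$ and $f''(u)\ge 0$ force $f'$ to be nondecreasing with a zero at $u=1$; hence $f'(u)\ge 0$ for $u\ge 1$ and $f'(u)\le 0$ for $u\le 1$, so $f'(u)(u-1)\ge 0$ in either case. This completes the verification along $\mathcal H$. The main (and only) obstacle is the algebraic simplification giving the formula for $\sigma_1-\sigma_2$; this is routine given the explicit expressions \eqref{ellforms}, but one has to be careful with the various powers of $v$ and $u$. Note that the result is only about consistency with Baker--Ericksen along $\mathcal H$, corresponding to the repeated-eigenvalue case; the full inequality off the curve depends on how $\Phi$ is extended and is not addressed here.
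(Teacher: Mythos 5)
Your proof is correct and follows essentially the same route as the paper: identify the principal Cauchy stresses from \eqref{sscs0}, compute $\sigma_1-\sigma_2$ via \eqref{ellforms} (where the $f(u)$ terms cancel, leaving a multiple of $u f'(u)$), and conclude from $f'(1)=0$ and $f''\ge 0$ that $f'(u)(u-1)\ge 0$. The only cosmetic difference is that you phrase Baker--Ericksen as $(\sigma_1-\sigma_2)(\lambda_1-\lambda_2)\ge 0$ while the paper divides rather than multiplies, writing $\dfrac{t_1-t_2}{\lambda_1-\lambda_2}\ge 0$; the intermediate algebra matches once the factor of $v=\lambda_1\lambda_2^2$ relating $\lambda_1\llll_{,1}-\tfrac12\lambda_2\llll_{,2}$ to $\sigma_1-\sigma_2$ is accounted for.
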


\begin{proof}
As noted in  Lemma \ref{flomdef}, the eigenvalues of $F(\lambda,\omega)$ are
$\lambda_1$, $\lambda_2$ with corresponding eigenspaces $\spn\{\omega\}$,
$\spn\{\omega\}^\perp$.  By \eqref{sscs0},
the Cauchy stress
$T\circ F(\lambda,\omega)$ shares these  eigenspaces, with corresponding
eigenvalues
\[
t_1(\lambda)=\left(\lambda_1\lambda_2^2\right)^{-1}\lambda_1\llll_{,1}(\lambda),\quad
t_2(\lambda)=\left(\lambda_1\lambda_2^2\right)^{-1}\half\lambda_2\llll_{,2}(\lambda).
\]

Since the eigenvalue $\lambda_2$ is repeated, the Baker-Ericksen inequality reads
\[
\frac{t_1(\lambda)-t_2(\lambda)}{\lambda_1-\lambda_2}\ge0,
\]
for all $\lambda=(\lambda_1,\lambda_2)\in\rr_+^2$ with $\lambda_1\ne\lambda_2$.
This is equivalent to the statement
\[
\frac{\lambda_1\llll_{,1}(\lambda)-\half \lambda_2\llll_{,2}(\lambda)}{\lambda_1-\lambda_2}\ge0,
\]
for all $\lambda=(\lambda_1,\lambda_2)\in\rr_+^2$ with $\lambda_1\ne\lambda_2$.

By \eqref{lamudef}, \eqref{ellforms}, we have
\begin{align*}
&\frac{\lambda_1\llll_{,1}(\lambda)-\half \lambda_2\llll_{,2}(\lambda)}{\lambda_1-\lambda_2}\\
&\qquad=\volume^{(\hd-1)/3}u^{-2/3}\;
\frac{\tfrac\hd3(\lambda_1-\lambda_2u)f(u)+(\lambda_1u+\half\lambda_2u^2)f'(u)}
{\lambda_1-\lambda_2}\\
&\qquad=\volume^{(\hd-1)/3}u^{-2/3}\;
\frac{3u^2}2\frac{f'(u)}{u-1},
\end{align*}
for all $\lambda_1,\lambda_2\in\rr_+$ with $\lambda_1\ne\lambda_2$.
This is nonnegative since $f'(1)=0$  
and $f''\ge0$, by assumption.
(Note that if $f''>0$, then the inequality is strict.)
\end{proof}

\begin{remark}
Any function $f\in\cc(M)$ satisfies the 
   satisfies the hypotheses of  Theorem \ref{BEineq} with 
strict inequality %, $f''>0$, 
in the neighborhood $\uu(\delta)$.
%independently of $\beta(f)$.
\end{remark}

As a final result, we discuss the physical significance of the parameters
$\kk(\hd)$ and $\beta(f)$.  

\begin{lemma}
\label{moduli}
Let $\hd\in\rr\setminus[0,3]$ and $M\ge0$.
Suppose that $W$ satifies \eqref{sef}, \eqref{objective}, \eqref{isotropic}, \eqref{homogeneity},
and for some $f\in\cc(M)$
\begin{equation*}
W\circ F(\lambda,\omega)=\volume^{\hd/3}f(u)
\isp{for all} \lambda\in\rr_+^2,\quad \omega\in S^2.
\end{equation*}
Then the bulk and shear moduli of $W$ at $F=I$ are
\begin{equation*}
\kk=\kk(\hd)=\tfrac\hd3\left(\tfrac\hd3-1\right) \isp{ and }\shear=\tfrac34\beta(f),
\end{equation*}
respectively.

 \end{lemma}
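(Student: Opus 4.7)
The plan is to extract each modulus from the second variation of the strain energy along a canonical one-parameter deformation path issuing from $F = I$: a purely dilatational path for $\kappa$, and a volume-preserving isochoric path for $\shear$. The multiplicative split $W(F) = J^{\hd/3}W(\Sigma(F))$ furnished by \eqref{sefhomo1} makes this decomposition natural, since the factor $J^{\hd/3}$ governs the entire volumetric response (including the residual pressure) while $W(\Sigma)$ governs the shear response.

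For the bulk modulus, take the dilatational path $F = \sigma I$, $\sigma > 0$. By \eqref{sssefrep} with $f(1) = 1$, $W(\sigma I) = \sigma^{\hd}$, i.e., $\hat W(J) \equiv W(J^{1/3}I) = J^{\hd/3}$ with $J = \det F$. The standard definition $\kappa = J\,\hat W''(J)|_{J=1}$ yields $\kappa = \tfrac{\hd}{3}\bigl(\tfrac{\hd}{3} - 1\bigr) = \kappa(\hd)$ by direct differentiation. Equivalently, \eqref{respress} gives the pressure $P = \mathcal P(J^{1/3}) = -(\hd/3) J^{\hd/3 - 1}$ along this path, and the alternative definition $\kappa = -J\,\partial P/\partial J|_{J = 1}$ produces the same answer.

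For the shear modulus, take the isochoric axisymmetric path $F(s) = \exp(sE_0)$ with $E_0 = \mathrm{diag}(2,-1,-1)$. Since $\tr E_0 = 0$ we have $\det F(s) \equiv 1$, and $F(s) = F(\lambda(s), e_1)$ with $\lambda_1(s) = e^{2s}$, $\lambda_2(s) = e^{-s}$, so $u(s) = e^{3s}$ and $\volume(s) = 1$. By Lemma \ref{constlem}, $W(F(s)) = \volume(s)^{\hd/3} f(u(s)) = f(e^{3s})$. Differentiating twice and invoking $f'(1) = 0$ yields
\[
\frac{d^2}{ds^2} W(F(s))\Big|_{s=0} = 9 f'(1) + 9 f''(1) = 9 \beta(f).
\]
The shear modulus is defined so that $(d^2/ds^2) W(\exp(sE))|_{s=0} = 2\shear |E|^2$ for every trace-free symmetric $E$; this prescription is well-posed because objectivity and isotropy force the resulting quadratic form on trace-free symmetric matrices to be a scalar multiple of $|E|^2$. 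With $|E_0|^2 = 4 + 1 + 1 = 6$, solving $9\beta(f) = 12\shear$ gives $\shear = \tfrac{3}{4}\beta(f)$.

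The main subtlety is justifying that identifying $\shear$ via the second variation along the isochoric path produces the classical shear modulus in the presence of a nonzero residual pressure. This is ensured by the split \eqref{sefhomo1}: the residual stress is entirely volumetric, so along an isochoric path where $J \equiv 1$ it does not enter the quadratic form, and the second variation depends only on $W(\Sigma)$. The numerical factor $3/4$ then arises from the geometric relation $u(s) = e^{3s}$ between the ratio of principal stretches and the Hencky-type parameter $s$ along the path.
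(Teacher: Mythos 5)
Your bulk modulus derivation is essentially the same as the paper's: the paper uses the definition $\kappa = \lim_{\alpha\to1}\tfrac{-\mathcal P(\alpha)+\mathcal P(1)}{\alpha^3-1}$ together with \eqref{respress}, which is exactly the pressure-derivative formula $\kappa = -J\,\partial P/\partial J|_{J=1}$ you mention as the equivalent route; both differentiate $J^{\hd/3}$ twice.

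Your shear modulus derivation is a genuinely different argument. The paper instead linearizes $D\cdot S(D_y\varphi)$ at the identity, restricts to radial displacements, and matches the coefficient $U_1(1)=\kk(\hd)+\beta(f)$ against the Navier--Cauchy combination $\kk+\tfrac43\shear$; you replace this with a second variation of $W$ along the isochoric geodesic $\exp(sE_0)$. Your approach is cleaner in that it avoids passing through the ODE \eqref{ODE2.0} and its coefficients, and it makes visible that $\kappa(\hd)$ and $\beta(f)$ are decoupled via the volumetric/isochoric split. The paper's approach is closer to the rest of the machinery already in the paper. Both yield $\shear=\tfrac34\beta(f)$.

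There is one place where your justification is too quick, although the conclusion is right. You assert that along the isochoric path the residual stress "does not enter the quadratic form." That is not literally correct. Writing $F(s)=I+sE_0+\tfrac{s^2}2E_0^2+O(s^3)$ and $S(I)=-\mathcal P(1)I$, the second variation along your path is
\[
\tfrac{d^2}{ds^2}W(\exp(sE_0))\Big|_{s=0} = \langle S(I),E_0^2\rangle + \langle D^2W(I)[E_0],E_0\rangle
= -\mathcal P(1)|E_0|^2 + \langle D^2W(I)[E_0],E_0\rangle,
\]
so the residual pressure does contribute a nonzero term $-\mathcal P(1)|E_0|^2$. Simultaneously, the elasticity tensor $C=D^2W(I)$ at a hydrostatically prestressed state is not the textbook isotropic tensor: writing $C_{ijkl}=\alpha\delta_{ij}\delta_{kl}+\beta\delta_{ik}\delta_{jl}+\gamma\delta_{il}\delta_{jk}$, objectivity applied to rotations gives $\gamma=\beta+\mathcal P(1)$, so on trace-free symmetric $E$ one gets $\langle C[E],E\rangle=(2\beta+\mathcal P(1))|E|^2$. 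The two pressure contributions cancel, leaving $2\beta|E|^2$, and it is $\beta$ (not $\beta+\tfrac12\mathcal P(1)$) that equals the Navier--Cauchy coefficient of $\Delta\bar\varphi$ in \eqref{linearelas}. This cancellation is precisely what makes your identity $\tfrac{d^2}{ds^2}W(\exp(sE))|_{s=0}=2\shear|E|^2$ agree with the paper's convention; it is worth stating explicitly rather than attributing it to the factor $J^{\hd/3}$ being frozen. Without that remark, a reader could reasonably wonder whether your second-variation prescription and the paper's operator-coefficient prescription coincide in a prestressed reference state (they do, but not for the reason you give).
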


\begin{proof}
The bulk modulus at $F=I$  is defined as the change in  
$-\mathcal P(\alpha)$ with respect to the fractional
change in volume at $\alpha=1$.  
Thus, from  \eqref{respress}, we find that
\begin{equation*}
\kk=\lim_{\alpha\to1}\frac{-\mathcal P(\alpha)+\mathcal P(1)}{\alpha^3-1}
=\lim_{\alpha\to1}\frac{\frac\hd3\left (\alpha^{\hd-3}-1\right)}{\alpha^3-1} =\kk(\hd).
\end{equation*}

For isotropic materials, the linearization of the operator
\[
D\cdot S(D_y\varphi)
\]
at $\varphi(y)=y$ is
\begin{equation}
\label{linearelas}
\shear\Delta\bar\varphi+\left(\kk+\tfrac13\;\shear\right)\nabla(\nabla\cdot\bar\varphi),\quad
\bar\varphi(y)=\varphi(y)-y,
\end{equation}
where $\shear$ is the shear modulus.
In the case of spherically symmetric deformations, we have from \eqref{d2phi} that
\[
\Delta\varphi(y)=\nabla(\nabla\cdot\varphi(y))=\left(\phi''(r)+\tfrac2r\phi'(r)-\tfrac2{r^2}\phi(r)\right)\omega,
\]
and so \eqref{linearelas} reduces to
\begin{equation*}
\left(\kk+\tfrac43\;\shear\right)\left({\bar\phi}''(r)+\tfrac2r{\bar\phi}'(r)-\tfrac2{r^2}\bar\phi(r)\right)\omega,
\quad\bar\phi(r)=\phi(r)-r.
\end{equation*}
Comparing this with \eqref{ODE2.0}, we obtain
\[
\kk+\tfrac43\;\shear=U_1(1)=\half U_2(1)=\kk(\hd)+\beta(f),
\]
which implies that $\shear=\tfrac34\beta(f)$.
\end{proof}

\bibliography{AffineElasticity}
\bibliographystyle{acm}

\end{document}